\documentclass[12pt]{article}
\usepackage{amssymb}
\usepackage{amsmath}
\usepackage{amsthm}
\usepackage{color}
\usepackage{comment}
\usepackage{geometry}		
\usepackage{graphicx}

\geometry{
	hmargin={25mm,25mm},
	vmargin={15mm,40mm}
}

\let\originalleft\left
\let\originalright\right
\renewcommand{\left}{\mathopen{}\mathclose\bgroup\originalleft}
\renewcommand{\right}{\aftergroup\egroup\originalright}
\newcommand{\myStep}[2]{{\bf Step #1} --- #2\\}

\begin{document}

\newcommand{\bn}{{\bf n}}
\newcommand{\bO}{{\bf 0}}
\newcommand{\bv}{{\bf v}}
\newcommand\cO{\mathcal{O}}
\newcommand{\ee}{\varepsilon}
\newcommand\cN{\mathcal{N}}
\newcommand{\rD}{{\rm D}}

\newcommand{\removableFootnote}[1]{\footnote{#1}}

\newtheorem{theorem}{Theorem}[section]
\newtheorem{corollary}[theorem]{Corollary}
\newtheorem{lemma}[theorem]{Lemma}
\newtheorem{proposition}[theorem]{Proposition}

\theoremstyle{definition}
\newtheorem{definition}{Definition}[section]
\newtheorem{example}[definition]{Example}

\theoremstyle{remark}
\newtheorem{remark}{Remark}[section]



\title{
Unfolding globally resonant homoclinic tangencies.
}
\author{
Sishu Shankar Muni, Robert I.~McLachlan, David J.W.~Simpson\\\\
School of Fundamental Sciences\\
Massey University\\
Palmerston North\\
New Zealand
}
\maketitle


\begin{abstract}
Global resonance is a mechanism by which a homoclinic tangency of a smooth map can have infinitely many asymptotically stable, single-round periodic solutions. To understand the bifurcation structure one would expect to see near such a tangency, in this paper we study one-parameter perturbations of typical globally resonant homoclinic tangencies. We assume the tangencies are formed by the stable and unstable manifolds of saddle fixed points of two-dimensional maps. We show the perturbations display two infinite sequences of bifurcations, one saddle-node the other period-doubling, between which single-round periodic solutions are asymptotically stable. Generically these scale like $|\lambda|^{2 k}$, as $k \to \infty$, where $-1 < \lambda < 1$ is the stable eigenvalue associated with the fixed point. If the perturbation is taken tangent to the surface of codimension-one homoclinic tangencies, they instead scale like $\frac{|\lambda|^k}{k}$. We also show slower scaling laws are possible if the perturbation admits further degeneracies.
\end{abstract}

\section{Introduction}
\label{sec:intro}
Homoclinic tangencies are perhaps the simplest mechanism in nonlinear dynamical systems for the loss of hyperbolicity and the creation of chaotic dynamics \cite{PaTa93}.  They occur most simply for saddle fixed points of two-dimensional maps.  A tangential intersection between the stable and unstable manifolds of a fixed point is a homoclinic tangency, see Fig 1.  This intersection is one point of an orbit that is homoclinic to the fixed point. This is a codimension-one phenomenon, meaning it can be equated to a single scalar condition.  At a homoclinic tangency the map typically has infinitely many periodic solutions. Some of these are {\em single-round}, roughly meaning that they shadow the homoclinic orbit once before repeating, Fig.~\ref{fig:TangUnf2}. Newhouse in \cite{Ne74} showed that infinitely many stable multi-round periodic solutions can coexist at a homoclinic tangency. At a generic homoclinic tangency all single-round periodic solutions of sufficiently large period are unstable \cite{GaSi72,GaSi73}.  
\begin{figure}[htbp!]
\begin{center}
\includegraphics[width=6cm]{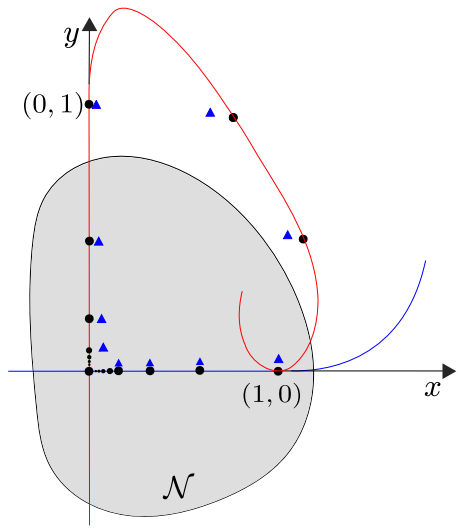}
\caption{A homoclinic tangency for a saddle fixed point of a two-dimensional map. In this illustration the eigenvalues associated with the fixed points are positive, i.e. $0 < \lambda < 1$ and $\sigma > 1$.  A coordinate change has been applied so that in the region $\cN$ (shaded) the coordinate axes coincide with the stable and unstable manifolds.  The homoclinic orbit $\Gamma_{\rm HC}$ is shown with black dots. A typical single-round periodic solution is shown with blue triangles.} 
\label{fig:TangUnf2}
\end{center}
\end{figure}
In our previous work \cite{MuMcSi21} we determined what degeneracies are needed for the infinitely many single-round periodic solutions to be asymptotically stable for smooth maps on $\mathbb{R}^2$. We found that the eigenvalues associated with the saddle fixed point, call them $\lambda$ and $\sigma$ satisfying $0 < |\lambda| < 1 < |\sigma|$, need to multiply to $1$ or $-1$.  If $\lambda \sigma = -1$ (so the map is orientation-reversing, at least locally), then the phenomenon is codimension-three.  It involves a `global resonance' condition on the reinjection mechanism for taking iterates back to a neighbourhood of the fixed point.

If a family of maps $f_\mu$, with $\mu \in \mathbb{R}^n$, has this phenomenon at some point $\mu^*$ in parameter space, then for any positive $j \in \mathbb{Z}$ there exists an open set containing $\mu^*$ in which $j$ asymptotically stable, single-round periodic solutions coexist.  Due to the high codimension, a precise description of the shape of these sets (for large $j$) is beyond the scope of this paper.  The approach we take here is to consider one-parameter families that perturb from a globally resonant homoclinic tangency.  Some information about the size and shapes of the sets can then be inferred from our results. Globally resonant homoclinic tangencies are hubs for extreme multi-stability.  They should occur generically in some families of maps with three or more parameters, such as the
 generalised H\'enon map \cite{GoKu05,KuMe19}, but to our knowledge they have yet to have been identified.

We find that as the value of $\mu$ is varied from $\mu^*$, there occurs infinite sequence of either saddle-node or period-doubling bifurcations that destroy the periodic solutions or make them unstable. Generically these sequences converge exponentially to $\mu^*$ with the distance (in parameter space) to the bifurcation asymptotically proportional to $|\lambda|^{2 k}$, where the periodic solutions have period $k + m$, for some fixed $m \ge 1$. If we move away from $\mu^*$ without a linear change to the codimension-one condition of a homoclinic tangency, the bifurcation values instead generically scale like $\frac{|\lambda|^k}{k}$. If the perturbation suffers further degeneracies, the scaling can be slower. Specifically we observe $|\lambda|^k$ and $\frac{1}{k}$ for an abstract example that we believe is representative of how the bifurcations scale in general.

Similar results have been obtained for more restrictive classes of maps. For area-preserving families the phenomenon is codimension-two and there exist infinitely many elliptic, single-round periodic solutions \cite{GoSh05}. As shown in \cite{DeGo15,GoGo09} the periodic solutions are destroyed or lose stability in bifurcations that scale like $|\alpha|^{2 k}$, matching our result. For piecewise-linear families the phenomenon is codimension-three \cite{Si14,SiTu17}.  In this setting the bifurcation values instead scale like $|\alpha|^k$ \cite{Si14b}, see also \cite{DoLa08}.

The phenomenon is also reminscient of a celebrated result of Newhouse \cite{Ne74}. Newhouse showed that for a generic (codimension-one) homoclinic tangency with $|\lambda \sigma| < 1$, infinitely many asymptotically stable periodic solutions coexist for a dense set of parameter values near that of the tangency.  However, in this scenario the periodic solutions are {\em multi-round} (shadow the homoclinic orbit several times before repeating).

The remainder of the paper is organised as follows. In \S2 we introduce two maps, $T_0$ and $T_1$, to describe the dynamics near the homoclinic orbit.  The map $T_0$ is a local map describing the saddle fixed point, while $T_1$ represents the reinjection mechanism.  The single-round periodic solutions correspond to fixed points of $T_0^k \circ T_1$. In \S3 we summarise the necessary and sufficient conditions derived in \cite{MuMcSi21} for the coexistence of infinitely many asymptotically stable, single-round periodic solutions. Then in \S4--6 we introduce parameter-dependence and state our main result (Theorem \ref{th:main}) for the scaling properties of the saddle-node and period-doubling bifurcations. A proof of Theorem \ref{th:main} is provided in \S7. Next in \S8 we illustrate Theorem 6.1 with a four-parameter family of $C^1$ maps (an abstract minimal example).  We observe numerically computed bifurcation values match those predicted by Theorem \ref{th:main}, to leading order, and observe slower scaling laws in special cases. Finally \S 9 provides a discussion and outlook for further studies.

\section{A quantitative description of the dynamics near a homoclinic connection}
\label{sec:localCoordinates}
\setcounter{equation}{0}

Let $f$ be a $C^\infty$ map on $\mathbb{R}^2$.
Suppose the origin $(x,y) = (0,0)$ is a saddle fixed point of $f$.
That is, $\rD f(0,0)$ has eigenvalues $\lambda, \sigma \in \mathbb{R}$ satisfying
\begin{equation}
0 < |\lambda| < 1 < |\sigma|.
\label{eq:eigAssumption}
\end{equation}
By the results of Sternberg \cite{ShSh98,St58}
there exists a $C^\infty$ coordinate change that transforms $f$ to
\begin{equation}
T_0(x,y) = \begin{bmatrix}
\lambda x \left( 1 + \cO \left( x y \right) \right) \\
\sigma y \left( 1 + \cO \left( x y \right) \right)
\end{bmatrix}.
\label{eq:T0general}
\end{equation}
In these new coordinates let $\cN$ be a convex neighbourhood of the origin for which
\begin{equation}
f(x,y) = T_0(x,y), \qquad \text{for all $(x,y) \in \cN$}.
\label{eq:fEqualsT0}
\end{equation}

See Fig.~\ref{fig:TangUnf2}.  If $\lambda^p \sigma^q \ne 1$ for all integers $p, q \ge 1$, then the eigenvalues are said to be {\em non-resonant} and the coordinate change can be chosen so that $T_0$ is linear.  If not, then $T_0$ must contain resonant terms that cannot be eliminated by the coordinate change. As explained in \S\ref{sec:parameters}, if $\lambda \sigma = 1$ we can reach the form
\begin{equation}
T_0(x,y) = \begin{bmatrix}
\lambda x \left( 1 + a_1 x y + \cO \left( x^2 y^2 \right) \right) \\
\sigma y \left( 1 + b_1 x y + \cO \left( x^2 y^2 \right) \right)
\end{bmatrix},
\label{eq:T0}
\end{equation}
where $a_1, b_1 \in \mathbb{R}$.
If $\lambda \sigma = -1$ we can obtain \eqref{eq:T0} with $a_1 = b_1 = 0$.

Now suppose there exists an orbit homoclinic to the origin, $\Gamma_{\rm HC}$.
By scaling $x$ and $y$ we may assume that $(1,0)$ and $(0,1)$ are points on $\Gamma_{\rm HC}$ and
\begin{equation}
(1,0), (\lambda,0), \left( 0, \tfrac{1}{\sigma} \right), \left( 0, \tfrac{1}{\sigma^2} \right) \in \cN,
\label{eq:pointsInN}
\end{equation}
By assumption there exists $m \ge 1$ such that $f^m(0,1) = (1,0)$.
We let $T_1 = f^m$ and expand $T_1$ in a Taylor series centred at $(x,y) = (0,1)$:
\begin{equation}
T_1(x,y) = \begin{bmatrix}
c_0 + c_1 x + c_2 (y-1) + \cO \left( (x,y-1)^2 \right) \\
d_0 + d_1 x + d_2 (y-1) + d_3 x^2 + d_4 x(y-1) + d_5 (y-1)^2 + \cO \left( (x,y-1)^3 \right)
\end{bmatrix},
\label{eq:T1}
\end{equation}
where $c_0 = 1$ and $d_0 = 0$. In \eqref{eq:T1} we have written explicitly the terms that will be important below.

\section{Conditions for infinitely many stable single-round solutions}
\label{sec:conditions}
\setcounter{equation}{0}

In this section we state the main results of our previous work \cite{MuMcSi21}.  First Theorem \ref{le:necessaryConditions} gives necessary conditions for the existence of infinitely many stable, single-round periodic solutions.  Then Theorem \ref{le:sufficientConditions} gives sufficient conditions for these to exist and be asymptotically stable.

\begin{theorem}
Suppose $f$ satisfies \eqref{eq:fEqualsT0} with \eqref{eq:T0general} and \eqref{eq:pointsInN}.
Suppose in \eqref{eq:T1} we have $c_0 = 1$, $d_0 = 0$, $d_5 \ne 0$, and $c_1 d_2 - c_2 d_1 \ne 0$.
Suppose $f$ has an infinite sequence of stable, single-round, periodic solutions accumulating on $\Gamma_{\rm HC}$.
Then
\begin{align}
d_2 &= 0, \label{eq:cond1} \\
|\lambda \sigma| &= 1, \label{eq:cond2} \\
|d_1| &= 1, \label{eq:cond3}
\end{align}
with $d_1 = 1$ in the case $\lambda \sigma = 1$.
Moreover, if $\lambda \sigma = 1$ and $T_0$ has the form \eqref{eq:T0} then
\begin{equation}
a_1 + b_1 = 0.
\label{eq:cond4}
\end{equation}
\label{le:necessaryConditions}
\end{theorem}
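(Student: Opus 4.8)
The plan is to realise the single-round periodic solutions of period $k+m$ as fixed points of $P_k := T_0^k \circ T_1$, so that a sequence accumulating on $\Gamma_{\rm HC}$ corresponds to fixed points $(x_*,y_*) \to (0,1)$ as $k \to \infty$. I would write $x_* = \lambda^k p$ and $y_* = 1 + \eta$ with $p = \cO(1)$ and $\eta \to 0$, set $(u_0,v_0) = T_1(x_*,y_*)$ (a point near $(1,0)$), and track the orbit $(x_j,y_j) = T_0^j(u_0,v_0)$, $0 \le j \le k$, which runs from near $(1,0)$ back to $(x_*,y_*)$. Everything then reduces to controlling $T_0^k$ and $\rD T_0^k$ along this segment.

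The central analytic tool is the slowly varying product $\xi_j := x_j y_j$. For $T_0$ in the form \eqref{eq:T0} with $\lambda\sigma = 1$ one has $\xi_{j+1} = \xi_j\bigl(1 + (a_1+b_1)\xi_j + \cO(\xi_j^2)\bigr)$, and along the relevant segment $\xi_j \approx \lambda^k$ for every $0 \le j \le k$, so that $S := \sum_{j=0}^{k-1}\xi_j \approx k\lambda^k$. This gives the product representations $x_k = \lambda^k u_0 \prod_j(1 + a_1\xi_j + \cdots)$ and $y_k = \sigma^k v_0 \prod_j(1 + b_1\xi_j + \cdots)$, i.e.\ correction factors $1 + a_1 S$ and $1 + b_1 S$ respectively, together with analogous expansions for the entries of $\rD T_0^k$. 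I expect this step to be the main obstacle: making the expansions of $T_0^k$ and $\rD T_0^k$ precise and uniform in $k$, with honest control of the resonant tail $\cO(\xi^2)$, is delicate because the orbit passes through a region where $y_j$ grows to $\cO(1)$, and it is exactly this bookkeeping that determines which coefficients survive in the limit.

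Granting these asymptotics, the conditions \eqref{eq:cond1}--\eqref{eq:cond3} come out at leading order. The trace of $\rD P_k = \rD T_0^k\,\rD T_1$ is dominated by $\sigma^k (\rD T_1)_{22} = \sigma^k(d_2 + 2 d_5\eta + \cdots)$, and since asymptotic stability forces $|\mathrm{tr}\,\rD P_k| < 2$, the quantity $\sigma^k(d_2 + 2 d_5\eta)$ must stay bounded; as $\eta \to 0$ this yields $d_2 = 0$ and, moreover, $\eta = \cO(\sigma^{-k})$. Feeding $d_2 = 0$ into the $y$-component of the fixed-point equation, namely $d_1 p\,(\lambda\sigma)^k + \sigma^k d_5\eta^2 = 1 + \eta$ with $p \to 1$ and $\sigma^k\eta^2 \to 0$, gives $d_1(\lambda\sigma)^k \to 1$; comparing magnitudes forces $|\lambda\sigma| = 1$ and $|d_1| = 1$, with $(\lambda\sigma)^k \equiv 1$ pinning $d_1 = 1$ when $\lambda\sigma = 1$ (for $\lambda\sigma = -1$ only one parity of $k$ can persist, leaving just $|d_1| = 1$).

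Finally, for \eqref{eq:cond4} I would retain the $S$-corrections. The $x$-component of the fixed-point equation gives $p = u_0\prod_j(1 + a_1\xi_j + \cdots) = 1 + c_2\eta + a_1 S + \cdots$, so the $y$-component becomes $(p + \sigma^k d_5\eta^2)(1 + b_1 S + \cdots) = 1 + \eta$; collecting orders produces the balance $\sigma^k d_5\eta^2 - (1 - c_2)\eta + (a_1 + b_1)S = 0$, a quadratic in $\eta$ whose constant term is proportional to $(a_1 + b_1)k\lambda^{2k}$. If $a_1 + b_1 \ne 0$ its discriminant behaves like $(1 - c_2)^2 - 4 d_5(a_1 + b_1)k$: one sign makes it negative, so no real fixed point exists for large $k$, while the other forces $\eta \sim \lambda^k\sqrt{k}$, whence $\mathrm{tr}\,\rD P_k \sim 2 d_5\sigma^k\eta \sim \sqrt{k} \to \infty$ and the solution is unstable. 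Either outcome contradicts the existence of infinitely many stable single-round solutions, so $a_1 + b_1 = 0$. The crux is that it is the combination $a_1 + b_1$ --- the $x$-equation supplying $a_1 S$ through $p$ and the $y$-equation supplying $b_1 S$ --- that controls this constant term, which is precisely why the second-order expansion of $T_0^k$ from the previous step must be carried out carefully.
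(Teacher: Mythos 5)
The paper itself does not prove Theorem \ref{le:necessaryConditions}; it is quoted from the authors' earlier work \cite{MuMcSi21}, so there is no in-text proof to compare against. That said, your proposal runs on exactly the machinery this paper builds for Theorem \ref{th:main}, used in reverse to extract necessary conditions: periodic points as fixed points of $T_0^k \circ T_1$, the scaling $x_* \sim \lambda^k$, $y_* = 1 + \cO(\lambda^k)$, and stability read off the trace and determinant of the $2\times 2$ Jacobian. Your sum $S = \sum_j \xi_j \approx k\lambda^k$ is precisely the $k a_1 x y$, $k b_1 x y$ correction in Lemma \ref{le:T0k}, and the uniform-in-$k$ control of the $\cO(\xi_j^2)$ tails that you correctly flag as the main obstacle is exactly the induction carried out in Appendix A; that argument transfers to your setting essentially unchanged. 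The logical chain is sound: boundedness of the trace forces $d_2 = 0$ and pins $\eta$, the $y$-component of the fixed-point equation forces $d_1 (\lambda\sigma)^k \to 1$ (note you need $d_1 \ne 0$ here, which follows from $c_1 d_2 - c_2 d_1 \ne 0$ once $d_2 = 0$), and the quadratic in $\hat\eta = \sigma^k \eta$ with constant term proportional to $(a_1+b_1)k$ has either no real roots or only unbounded ones when $a_1 + b_1 \ne 0$, contradicting stability either way.

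One point to tighten: when you conclude $\eta = \cO(\sigma^{-k})$ and hence $\sigma^k \eta^2 \to 0$, the $(2,2)$ entry of $\rD T_1$ also contains the term $d_4 x_*$, so the trace bound only gives $\eta = \cO\left( |\lambda|^k + |\sigma|^{-k} \right)$, and $\sigma^k \eta^2$ then contains a term $(\lambda\sigma)^k \lambda^k$ which is not obviously small before $|\lambda\sigma| = 1$ has been established --- a mild circularity. It is patched by first invoking the determinant: $\delta_k \approx (\lambda\sigma)^k (c_1 d_2 - c_2 d_1)$ must be bounded for stability, and the hypothesis $c_1 d_2 - c_2 d_1 \ne 0$ then gives $|\lambda\sigma| \le 1$ at the outset, after which your magnitude comparison in the fixed-point equation upgrades this to equality. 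With that reordering the argument is complete.
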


The equation $d_2 = 0$ corresponds to a homoclinic tangency, as shown in Fig.~\ref{fig:TangUnf2}.
With $|\lambda \sigma| = 1$ we have $|\det(\rD f)| = 1$ at the origin.
The condition $d_1 = 1$ is a global condition, see \cite{MuMcSi21} for a geometric interpretation,
with which the tangency is termed globally resonant.
Finally if $\lambda \sigma = 1$ and $T_0$ has the form \eqref{eq:T0}, then
$\det(\rD f) = 1 + 2 (a_1 + b_1) x y + \cO \left( x^2 y^2 \right)$.
Thus the condition $a_1 + b_1 = 0$ implies that as $(x,y)$ is varied from $(0,0)$, the value of $\det(\rD f)$ varies quadratically instead of linearly (as is generically the case).
Now suppose $|\lambda \sigma| = 1$.
Given $k_{\rm min} \in \mathbb{Z}$, let
\begin{equation}
K(k_{\rm min}) = \left\{ k \in \mathbb{Z} \,\big|\, k \ge k_{\rm min}, \left( \lambda \sigma \right)^k = d_1 \right\}.
\label{eq:K}
\end{equation}
If $\lambda \sigma = d_1 = 1$, then $K$ is the set of all integers greater than or equal to $k_{\rm min}$.
If $\lambda \sigma = -1$ and $d_1 = 1$ [resp.~$d_1 = -1$],
then $K$ is the set of all even [resp.~odd] integers greater than or equal to $k_{\rm min}$.
Also let
\begin{equation}
\Delta = \left( 1 - c_2 - d_1 d_4 \right)^2 - 4 d_5 (d_3 + c_1 d_1).
\label{eq:Delta}
\end{equation}

\begin{theorem}
Suppose $f$ satisfies \eqref{eq:fEqualsT0} with \eqref{eq:T0} and \eqref{eq:pointsInN}.
Suppose $c_0 = 1$, $d_0 = 0$, and $d_5 \ne 0$.
Suppose \eqref{eq:cond1}--\eqref{eq:cond4} are satisfied, $\Delta > 0$, and
\begin{equation}
-1 < c_2 < 1 - \frac{\sqrt{\Delta}}{2}.
\label{eq:sufficientInequality}
\end{equation}
Then there exists $k_{\rm min} \in \mathbb{Z}$ such that $f$ has an
asymptotically stable period-$(k+m)$ solution for all $k \in K(k_{\rm min})$.
\label{le:sufficientConditions}
\end{theorem}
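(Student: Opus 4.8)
The plan is to realise the single-round period-$(k+m)$ solutions as fixed points of the composite return map $P_k = T_0^k \circ T_1$, and to study these fixed points asymptotically as $k \to \infty$ along $K(k_{\rm min})$. First I would establish the needed expansion of the local map: iterating \eqref{eq:T0} gives $T_0^k(x,y) = \left( \lambda^k x \left( 1 + \cO(\cdot) \right), \sigma^k y \left( 1 + \cO(\cdot) \right) \right)$, and I would use the resonance condition \eqref{eq:cond4} (which makes $\det \rD T_0 = 1 + \cO(x^2 y^2)$) to show the correction factors stay bounded, and in fact converge as $k \to \infty$, along the relevant orbit segment where $x_j y_j = \cO \left( \lambda^k \right)$ at every step. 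Writing $T_1$ as in \eqref{eq:T1} with $c_0 = 1$, $d_0 = 0$ and the tangency condition $d_2 = 0$ from \eqref{eq:cond1}, the fixed-point equations $P_k(x,y) = (x,y)$ then read, to leading order, $x = \lambda^k X$ and $1 + w = \sigma^k Y$, where $(X,Y) = T_1(x, 1+w)$ and $w = y - 1$.

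The crucial simplification comes from global resonance. Because $k \in K(k_{\rm min})$ forces $(\lambda \sigma)^k = d_1$ and $|d_1| = 1$ gives $d_1^2 = 1$, the leading reinjection term satisfies $\sigma^k d_1 x \approx d_1 (\lambda\sigma)^k X = X$, so the $\cO(1)$ parts of $1 + w = \sigma^k Y$ cancel and a genuine balance survives at order $\lambda^k$. I would therefore rescale by setting $x = \lambda^k \hat x$ and $w = \lambda^k \hat y$; substituting and using $\sigma^k \lambda^{2k} = d_1 \lambda^k$ collapses every surviving term to order $\lambda^k$, and dividing through yields, in the limit $k \to \infty$, the scalar quadratic
\[
d_1 d_5 \hat y^2 - (1 - c_2 - d_1 d_4)\hat y + (c_1 + d_1 d_3) = 0 ,
\]
whose discriminant is precisely $\Delta$ of \eqref{eq:Delta}. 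Hence $\Delta > 0$ gives two real roots $\hat y_\pm$, one for each of the two branches of single-round solutions, with the $\hat x$-component slaved, $\hat x \to 1$.

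For stability I would compute $\rD P_k = \rD T_0^k \cdot \rD T_1$ at the fixed point. Although the $(2,1)$ entry grows like $\sigma^k$, the trace and determinant stay bounded: using $\partial_y Y = \cO \left( \lambda^k \right)$ and $(\lambda\sigma)^k = d_1$ one finds $\det \rD P_k \to -c_2$ and $\operatorname{tr} \rD P_k \to d_1(d_4 + 2 d_5 \hat y_\pm)$, and substituting the roots $\hat y_\pm$ gives the clean limits $\operatorname{tr} \to (1 - c_2) \pm \sqrt{\Delta}$. The Schur--Cohn (Jury) criterion for a quadratic $z^2 - \tau z + \delta$ to have both roots in the open unit disk, namely $|\delta| < 1$ together with $|\tau| < 1 + \delta$, then reads $|c_2| < 1$ and $\left| (1-c_2) \pm \sqrt{\Delta} \right| < 1 - c_2$; the $+$ branch always fails (it is a saddle), while the $-$ branch holds exactly when $-1 < c_2 < 1 - \tfrac{\sqrt{\Delta}}{2}$, which is \eqref{eq:sufficientInequality}. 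Thus the limiting map has a hyperbolic, asymptotically stable fixed point, and since the rescaled maps converge in $C^1$ to this limit, persistence of hyperbolic fixed points (the implicit function theorem applied to $P_k(x,y) - (x,y) = 0$, with eigenvalues varying continuously) yields, for all sufficiently large $k \in K(k_{\rm min})$, an actual fixed point of $P_k$ with eigenvalues inside the unit disk, i.e.\ an asymptotically stable period-$(k+m)$ solution.

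The main obstacle I expect is the uniformity in $k$. The rescaling is $k$-dependent and the strongly expanding direction makes individual entries of $\rD P_k$ blow up, so the whole argument rests on proving that $T_0^k$ and $\rD T_0^k$, after rescaling, converge in $C^1$ on a fixed neighbourhood of $\hat y_-$ with errors that are $o(1)$ uniformly in the relevant variables; this is exactly where \eqref{eq:cond4} is needed, to prevent the resonant corrections from accumulating under iteration. Once these uniform estimates are in hand, choosing $k_{\rm min}$ large enough to make the error terms negligible compared with the hyperbolicity gap of the limiting map completes the proof.
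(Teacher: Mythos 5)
Your proposal is correct and follows essentially the same route as the paper: although the paper defers the proof of Theorem \ref{le:sufficientConditions} to \cite{MuMcSi21}, its proof of the parametrised version (Theorem \ref{th:main}, Steps 3--4 of \S\ref{sec:mainProof}) specialised to $\mu = \bO$ is exactly your argument --- fixed points of $T_0^k \circ T_1$ under the scaling $y = 1 + \psi_k \alpha^k$, the quadratic \eqref{eq:psikQuadratic} whose discriminant reduces to $\Delta$, the trace and determinant limits $1 - c_2 - \sqrt{\Delta}$ and $-c_2$ fed into the Jury conditions, and Lemma \ref{le:T0k} supplying precisely the uniform control of $T_0^k$ that you flag as the main obstacle. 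One small point of mechanism: the resonant factors $1 + k a_1 x y$ converge to $1$ whether or not \eqref{eq:cond4} holds (since $xy = \mathcal{O}(\alpha^k)$); the condition $a_1 + b_1 = 0$ is needed because each factor injects an $\mathcal{O}(k\alpha^k)$ term into the fixed-point balance, which would swamp the $\mathcal{O}(\alpha^k)$ terms generating your limiting quadratic unless the two contributions cancel in the sum.
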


Theorems \ref{le:necessaryConditions} and \ref{le:sufficientConditions} imply that the phenomenon of infinitely many asymptotically stable, single-round periodic solutions is codimension-three in the case $\lambda \sigma = -1$.  Specifically the three independent codimension-one conditions \eqref{eq:cond1}--\eqref{eq:cond3} need to hold, and the phenomenon indeed occurs if $\Delta > 0$ and \eqref{eq:sufficientInequality} holds.  In the case $\lambda \sigma = 1$ the phenomenon is codimension-four as we also require \eqref{eq:cond4}.  This condition is absent when $\lambda \sigma = -1$ because in this case the cubic terms in \eqref{eq:T0} are removable.

\section{Smooth parameter dependence}
\label{sec:parameters}
\setcounter{equation}{0}

Now suppose $f_\mu$ is a $C^\infty$ map on $\mathbb{R}^2$
with a $C^\infty$ dependence on a parameter $\mu \in \mathbb{R}^n$.
Let $\bO \in \mathbb{R}^n$ denote the origin in parameter space.
Suppose that for all $\mu$ in some region containing $\bO$,
the origin in phase space $(x,y) = (0,0)$ is a fixed point of $f_\mu$.
Let $\lambda = \lambda(\mu)$ and $\sigma = \sigma(\mu)$ be its associated eigenvalues
(these are $C^\infty$ functions of $\mu$) and suppose
\begin{align}
\lambda(\bO) &= \alpha, \label{eq:lambda} \\
\sigma(\bO) &= \frac{\chi_{\rm eig}}{\alpha}, \label{eq:sigma}
\end{align}
with $0 < \alpha < 1$ and $\chi_{\rm eig} \in \{ -1, 1 \}$.
With $\mu = \bO$ we have $|\lambda \sigma| = 1$, so as described above $T_0$ can be assumed to have the form \eqref{eq:T0}.
We now show we can assume $T_0$ has this form when the value of $\mu$ is sufficiently small.

\begin{lemma}
There exists a neighbourhood $\cN_{\rm param} \subset \mathbb{R}^n$ of $\bO$
and a $C^\infty$ coordinate change that puts $f_\mu$ in the form \eqref{eq:T0} for all $\mu \in \cN_{\rm param}$.
\label{le:T0}
\end{lemma}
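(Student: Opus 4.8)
The plan is to construct the coordinate change in three stages — a linear diagonalisation, a formal order-by-order removal of non-resonant terms, and a smooth realisation of the resulting formal transformation — tracking $C^\infty$ dependence on $\mu$ at every step. Since $\lambda(\mu)$ and $\sigma(\mu)$ are real, distinct and $C^\infty$ in $\mu$, with $0 < |\lambda| < 1 < |\sigma|$ throughout a neighbourhood of $\bO$, the associated eigenvectors may be chosen to depend smoothly on $\mu$. Hence a $\mu$-dependent linear change of coordinates, itself $C^\infty$ in $\mu$, brings $\rD f_\mu(0,0)$ to $\mathrm{diag}(\lambda,\sigma)$; this is the starting point for normalising the nonlinear terms.

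The heart of the argument is to eliminate the non-resonant monomials by successive near-identity transformations. A degree-$N$ transformation removing a monomial $x^p y^q$ from the first [resp.\ second] component requires solving a homological equation whose divisor is $\lambda^p \sigma^q - \lambda$ [resp.\ $\lambda^p \sigma^q - \sigma$]. At $\bO$, where $\sigma = \frac{\chi_{\rm eig}}{\alpha}$ and $\lambda = \alpha$, the first-component divisor equals $\alpha\left( \chi_{\rm eig}^{\,q} \alpha^{p-q-1} - 1 \right)$, which vanishes precisely when $p = q+1$ (and, if $\chi_{\rm eig} = -1$, additionally when $q$ is even). The retained monomials are therefore exactly $x^{j+1} y^j$ in the first component and $x^j y^{j+1}$ in the second — precisely the structure of \eqref{eq:T0} — and when $\chi_{\rm eig} = -1$ the lowest nonlinear resonant terms are $x^3 y^2$ and $x^2 y^3$, forcing $a_1 = b_1 = 0$. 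The key quantitative point is that every divisor that must be inverted is bounded away from zero uniformly in $(p,q)$: for $p \ne q+1$ one has $\left| \chi_{\rm eig}^{\,q} \alpha^{p-q-1} - 1 \right| \ge \min\left( 1 - \alpha, \frac{1}{\alpha} - 1 \right)$, and similarly for the second component. By continuity these divisors stay nonzero on a neighbourhood $\cN_{\rm param}$, so each transformation coefficient — a rational function of $\lambda(\mu)$, $\sigma(\mu)$ and the Taylor coefficients of $f_\mu$ — is $C^\infty$ in $\mu$ there.

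The main obstacle is to pass from this formal construction to a genuine $C^\infty$ coordinate change valid uniformly on $\cN_{\rm param}$, while reconciling the clean form \eqref{eq:T0} with the fact that for $\mu \ne \bO$ the eigenvalues may satisfy further resonances. I would control the latter by observing that a resonance $\lambda^{p-1}\sigma^q = 1$ with $q \ne p-1$ forces $p + q \ge \frac{-\log\alpha}{\max(|s_1|,|s_2|)}$, where $s_1, s_2$ measure the deviation of $\log|\lambda|, \log|\sigma|$ from their values at $\bO$; after shrinking $\cN_{\rm param}$ any such extra resonant monomial occurs only at high order and then has both $x$- and $y$-degree large enough to be absorbed into the remainder $\cO(x^2 y^2)$ of \eqref{eq:T0}, whereas the structural monomials that define the form never acquire a small divisor. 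Finally, to realise the formal transformation I would apply a parametrised Borel lemma to obtain a coordinate change that is $C^\infty$ in $(x,y,\mu)$ with the prescribed Taylor series, and then remove the resulting remainder — flat in $(x,y)$ and smooth in $\mu$ — using the parametrised $C^\infty$ normalisation theorem for hyperbolic fixed points of Sternberg \cite{ShSh98,St58}. The uniform divisor bound established above is exactly the hypothesis that makes this last step go through smoothly and jointly in $(x,y,\mu)$.
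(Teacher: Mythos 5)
Your proposal is correct and follows essentially the same route as the paper: diagonalise the linear part, remove the non-resonant monomials by a coordinate change smooth in $(x,y,\mu)$, and observe that for $\mu$ near $\bO$ the only unremovable low-order terms are $x^2y$ in the first component and $xy^2$ in the second, everything else being either removable with uniformly controlled divisors or absorbable into the $\cO\left(x^2y^2\right)$ remainders of \eqref{eq:T0}. The paper compresses the removal step into a citation of a ``standard asymptotic matching exercise,'' whereas you unpack it (homological equations, uniform divisor bounds, Borel realisation plus Sternberg for the flat remainder) and make explicit the point, left implicit in the paper, that resonances present only for $\mu \ne \bO$ occur at high enough order in both variables to be harmless.
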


\begin{proof}
Via a linear transformation $f_\mu$ can be transformed to
\begin{equation}
\begin{bmatrix} x \\ y \end{bmatrix} \mapsto
\begin{bmatrix}
\lambda x + \sum\limits_{i \ge 0, \,j \ge 0, \,i+j \ge 2} a_{ij} x^i y^j \\
\sigma y + \sum\limits_{i \ge 0, \,j \ge 0, \,i+j \ge 2} b_{ij} x^i y^j
\nonumber
\end{bmatrix},
\end{equation}
for some $a_{ij}, b_{ij} \in \mathbb{R}$.
It is a standard asymptotic matching exercise
to show that via an additional $C^\infty$ coordinate change we can achieve $a_{ij} = 0$ if $\lambda^{i-1} \sigma^j \ne 1$,
and $b_{ij} = 0$ if $\lambda^i \sigma^{j-1} \ne 1$. The remainder of the proof is based on this fact.

Assume the value of $\mu$ is small enough that \eqref{eq:eigAssumption} is satisfied.
Then $\lambda^p \sigma^q = 1$ is only possible with $p, q \ge 1$,
so a $C^\infty$ coordinate change can be performed to reduce the map to
\begin{equation}
\begin{bmatrix} x \\ y \end{bmatrix} \mapsto
\begin{bmatrix}
\lambda x + \sum\limits_{i \ge 2, \,j \ge 1} a_{ij} x^i y^j \\
\sigma y + \sum\limits_{i \ge 1, \,j \ge 2} b_{ij} x^i y^j
\nonumber
\end{bmatrix}.
\end{equation}
Since $|\lambda \sigma| = 1$ when $\mu = \bO$
we can assume $\mu$ is small enough that $\lambda^{p-1} \sigma \ne 1$ for all $p \ge 3$
and $\lambda \sigma^{q-1} \ne 1$ for all $q \ge 3$.
Consequently the map can further be reduced to
\begin{equation}
\begin{bmatrix} x \\ y \end{bmatrix} \mapsto
\begin{bmatrix}
\lambda x + a_{21} x^2 y + \sum\limits_{i \ge 3, \,j \ge 2} a_{ij} x^i y^j \\
\sigma y + b_{12} x y^2 + \sum\limits_{i \ge 2, \,j \ge 3} b_{ij} x^i y^j
\nonumber
\end{bmatrix},
\end{equation}
which can be rewritten as \eqref{eq:T0}.
\end{proof}

The product of the eigenvalues is
\begin{equation}
\lambda(\mu) \sigma(\mu) = \chi_{\rm eig} + \bn_{\rm eig}^{\sf T} \mu + \cO \left( \| \mu \|^2 \right),
\label{eq:nEig}
\end{equation}
where $\bn_{\rm eig}$ is the gradient of $\lambda \sigma$ evaluated at $\mu = \bO$.
The following result is an elementary application of the implicit function theorem.

\begin{lemma}
Suppose $\bn_{\rm eig} \ne \bO$.
Then $|\lambda(\mu) \sigma(\mu)| = 1$ on a $C^\infty$ codimension-one surface
intersecting $\mu = \bO$ and with normal vector $\bn_{\rm eig}$ at $\mu = \bO$ (as illustrated in Fig.~\ref{fig:Surf_Tang_Normal}).
\label{le:areaPreservingSurface}
\end{lemma}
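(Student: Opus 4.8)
The plan is to reduce the condition $|\lambda(\mu)\sigma(\mu)| = 1$ to a single smooth scalar equation and then invoke the implicit function theorem, exactly as the paper anticipates. First I would define the $C^\infty$ function
\begin{equation}
g(\mu) = \lambda(\mu)\sigma(\mu) - \chi_{\rm eig}.
\nonumber
\end{equation}
By \eqref{eq:lambda}--\eqref{eq:sigma} we have $\lambda(\bO)\sigma(\bO) = \alpha \cdot \frac{\chi_{\rm eig}}{\alpha} = \chi_{\rm eig}$, so $g(\bO) = 0$, and by \eqref{eq:nEig} the gradient of $g$ at $\mu = \bO$ is precisely $\bn_{\rm eig}$, which is nonzero by hypothesis.

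Since $\bn_{\rm eig} \ne \bO$, at least one partial derivative $\frac{\partial g}{\partial \mu_i}(\bO)$ is nonzero. I would then apply the implicit function theorem to solve $g(\mu) = 0$ locally for the coordinate $\mu_i$ as a $C^\infty$ function of the remaining $n-1$ coordinates. This realises the zero set $\{g = 0\}$ near $\bO$ as a $C^\infty$ codimension-one surface passing through $\mu = \bO$. Because this surface is a level set of $g$, its normal vector at $\bO$ is parallel to $\nabla g(\bO) = \bn_{\rm eig}$, which gives the stated normal direction.

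It remains to identify $\{g = 0\}$ with $\{|\lambda\sigma| = 1\}$ near $\bO$, and this sign bookkeeping is the only point that requires any care. Since $\lambda(\bO)\sigma(\bO) = \chi_{\rm eig} \in \{-1,1\}$ is bounded away from $0$, continuity guarantees a neighbourhood of $\bO$ on which $\lambda(\mu)\sigma(\mu)$ keeps the sign of $\chi_{\rm eig}$. On that neighbourhood $|\lambda\sigma| = 1$ holds exactly when $\lambda\sigma = \chi_{\rm eig}$, i.e.\ when $g(\mu) = 0$. Shrinking the neighbourhood of $\bO$ so that both the implicit function theorem and this sign argument apply simultaneously, the two sets coincide, completing the argument. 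I expect no genuine obstacle: the only subtlety is resolving the absolute value into the smooth equation $\lambda\sigma = \chi_{\rm eig}$, after which the result is a direct application of the implicit function theorem.
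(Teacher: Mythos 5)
Your proposal is correct and is precisely the ``elementary application of the implicit function theorem'' that the paper invokes without writing out: the paper gives no further detail, and your reduction of $|\lambda\sigma|=1$ to the smooth equation $\lambda\sigma=\chi_{\rm eig}$ via the sign of $\lambda(\bO)\sigma(\bO)$, followed by the implicit function theorem applied to $g(\mu)=\lambda(\mu)\sigma(\mu)-\chi_{\rm eig}$ with $\nabla g(\bO)=\bn_{\rm eig}\ne\bO$, is exactly the intended argument. No gaps.
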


\begin{figure}
\begin{center}
\includegraphics[width=8cm]{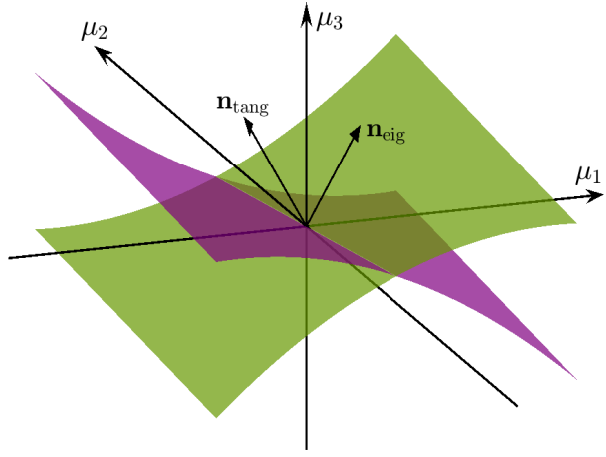}
\caption{A sketch of codimension-one surfaces of homoclinic tangencies (green) and where $\lambda(\mu) \sigma(\mu) = 1$ (purple).  The vectors ${\bf n}_{\rm tang}$ and ${\bf n}_{\rm eig}$, respectively, are normal to these surfaces at the origin $\mu = \bO$. 
\label{fig:Surf_Tang_Normal}
} 
\end{center}
\end{figure}


\section{The codimension-one surface of homoclinic tangencies}
\label{sec:tangency}
\setcounter{equation}{0}
In this section we describe the codimension-one surface of homoclinic tangencies that intersects $\mu = \bO$ where we will be assuming that a globally resonant homoclinic tangency occurs. 

Suppose \eqref{eq:pointsInN} is satisfied when $\mu = \bO$.
Write $f_\mu^m$ as \eqref{eq:T1} and suppose
\begin{align}
c_0(\bO) &= 1, \label{eq:c0} \\
d_0(\bO) &= 0, \label{eq:d0}
\end{align}
so that $f_\bO$ has an orbit homoclinic to the origin through $(x,y) = (1,0)$ and $(0,1)$.
Also suppose
\begin{align}
d_2(\bO) &= 0, \label{eq:d2} \\
d_5(\bO) &= d_{5,0} \ne 0, \label{eq:d5}
\end{align}
for a quadratic tangency.
Also write
\begin{equation}
d_0(\mu) = \bn_{\rm tang}^{\sf T} \mu + \cO \left( \| \mu \|^2 \right).
\label{eq:nTang}
\end{equation}

\begin{lemma}
Suppose \eqref{eq:pointsInN} is satisfied when $\mu = \bO$, \eqref{eq:c0}--\eqref{eq:d5} are satisfied, and  $\bn_{\rm tang} \ne \bO$.
Then $f_\mu$ has a quadratic homoclinic tangency to $(x,y) = (0,0)$ on a $C^\infty$ codimension-one surface intersecting $\mu = \bO$ and with normal vector $\bn_{\rm tang}$ at $\mu = \bO$.
Moreover, this tangency occurs at $(x,y) = (0,Y(\mu))$ where $Y$ is a $C^\infty$ function with $Y(\bO) = 1$.
\label{le:tangencySurface}
\end{lemma}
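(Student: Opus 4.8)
The plan is to reduce the homoclinic tangency condition to a statement about the image of the local unstable manifold under $T_1$, and then to apply the implicit function theorem twice. In the Sternberg coordinates the $x$-axis is the local stable manifold and the $y$-axis the local unstable manifold, since both are invariant under $T_0$. Near the homoclinic point $(1,0)$ the stable manifold thus coincides with $\{y=0\}$, while the relevant piece of the unstable manifold is the image under $T_1$ of the segment $\{(0,y):y\approx 1\}$ of the $y$-axis. Setting $x=0$ in \eqref{eq:T1}, this image curve has second coordinate
\[
\eta(y,\mu) := d_0 + d_2\,(y-1) + d_5\,(y-1)^2 + \cO\!\left((y-1)^3\right).
\]
A homoclinic tangency occurs precisely when this curve meets $\{y=0\}$ tangentially, i.e.\ when $\eta=0$ and $\partial_y\eta=0$ simultaneously, and it is quadratic when in addition $\partial_y^2\eta\neq 0$.

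First I would locate the critical point of $\eta$. Since $\partial_y\eta(y,\mu)=d_2+2d_5(y-1)+\cO((y-1)^2)$ vanishes at $(y,\mu)=(1,\bO)$ (because $d_2(\bO)=0$) and has nonzero $y$-derivative $2d_5(\bO)=2d_{5,0}$ there, the implicit function theorem yields a unique $C^\infty$ function $Y(\mu)$ near $\mu=\bO$ with $\partial_y\eta(Y(\mu),\mu)=0$ and $Y(\bO)=1$. This is the claimed $Y$, and the point of the unstable manifold at which the tangency (if present) occurs is $(0,Y(\mu))$. Quadraticity is automatic, since $\partial_y^2\eta(Y(\mu),\mu)=2d_5+\cO(y-1)\to 2d_{5,0}\neq 0$ as $\mu\to\bO$.

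Next I would define $H(\mu):=\eta(Y(\mu),\mu)$, the critical value of $\eta$; the tangency surface is the level set $\{H=0\}$. Clearly $H(\bO)=\eta(1,\bO)=d_0(\bO)=0$. For the normal vector I would differentiate by the chain rule,
\[
\nabla_\mu H(\bO) = \partial_y\eta(1,\bO)\,\nabla_\mu Y(\bO) + \nabla_\mu\eta(1,\bO),
\]
and note that the first term vanishes because $\partial_y\eta(1,\bO)=0$ by the construction of $Y$. Since $\eta(1,\mu)=d_0(\mu)$ identically, the surviving term equals $\nabla_\mu d_0(\bO)=\bn_{\rm tang}$ by \eqref{eq:nTang}. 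Hence $\nabla_\mu H(\bO)=\bn_{\rm tang}\neq\bO$, so by the implicit function theorem $\{H=0\}$ is a $C^\infty$ codimension-one surface through $\bO$ whose normal at $\bO$ is exactly $\bn_{\rm tang}$.

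The only genuinely delicate point is the geometric reduction in the first paragraph: one must justify that near $(1,0)$ the stable and unstable manifolds are exactly $\{y=0\}$ and the curve $y\mapsto T_1(0,y)$, so that tangency is equivalent to the two scalar conditions on $\eta$. Here it is important that $T_1=f_\mu^m$ is a local diffeomorphism near $(0,1)$, so its image of the $y$-axis is a regularly embedded curve with tangent vector $\rD T_1(0,1)(0,1)^{\sf T}=(c_2,d_2)^{\sf T}\neq\bO$; this guarantees that $\partial_y\eta=0$ genuinely corresponds to a horizontal tangent rather than a singular point of the parametrization. Everything after this reduction is two applications of the implicit function theorem, and the pleasant cancellation of the $\partial_y\eta$ term (evaluated at a critical point of $\eta$) is what delivers the normal direction $\bn_{\rm tang}$ exactly rather than merely up to scale.
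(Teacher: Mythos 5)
Your proof is correct and follows essentially the same route as the paper: one application of the implicit function theorem to locate the critical point $Y(\mu)$ (the paper's $1+u_{\rm tang}(\mu)$) of the second component of $T_1(0,\cdot)$, and a second to the resulting critical-value function whose gradient at $\bO$ is $\bn_{\rm tang}$. Your explicit chain-rule computation showing why the $\nabla_\mu Y$ term drops out is a welcome elaboration of the step the paper states without detail.
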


\begin{proof}
Let $T_{1,2}$ denote the second component of $T_1$ \eqref{eq:T1}.
The function
\begin{equation}
h(u;\mu) = \frac{\partial T_{1,2}}{\partial y}(0,1+u) = d_2(\mu) + 2 d_5(\mu) u + \cO(2),
\nonumber
\end{equation}
is a $C^\infty$ function of $u \in \mathbb{R}$ and $\mu$.
Since $h(0; \mathbf{0}) = 0$ by \eqref{eq:d2} and $\frac{\partial h}{\partial u}(0;\mathbf{0}) \ne 0$ by \eqref{eq:d5},
the implicit function theorem implies there exists a $C^\infty$ function $u_{\rm tang}(\mu)$ such that
$h(u_{\rm tang}(\mu);\mu) = 0$ locally.

By construction, the unstable manifold of $(x,y) = (0,0)$ is tangent to the $x$-axis at $T_1(0,1+u_{\rm tang})$.
Moreover this tangency is quadratic by \eqref{eq:d5}.
Thus a homoclinic tangency occurs if
$T_{1,2}(0,1+u_{\rm tang}) = d_0 + d_2 u_{\rm tang} + d_5 u_{\rm tang}^2 + \cO(3) = 0$.
This function is $C^\infty$ and
\begin{equation}
T_{1,2}(0,1+u_{\rm tang}) = \bn_{\rm tang}^{\sf T} \mu + \cO \left( \| \mu \|^2 \right).
\nonumber
\end{equation}
Since $\bn_{\rm tang} \ne \bO$ the result follows from another application of the implicit function theorem.
\end{proof}

\section{Sequences of saddle-node and period-doubling bifurcations}
\label{sec:mainResult}
\setcounter{equation}{0}

Suppose
\begin{equation}
d_1(\bO) = \chi,
\label{eq:d1}
\end{equation}
where $\chi \in \{ -1, 1 \}$.
Write
\begin{equation}
\begin{aligned}
a_1(\bO) &= a_{1,0} \,,
&\qquad c_1(\bO) &= c_{1,0} \,,
&\qquad d_3(\bO) &= d_{3,0} \,, \\
b_1(\bO) &= b_{1,0} \,,
&\qquad c_2(\bO) &= c_{2,0} \,,
&\qquad d_4(\bO) &= d_{4,0} \,,
\end{aligned}
\label{eq:other}
\end{equation}
and, recalling \eqref{eq:Delta}, let
\begin{equation}
\Delta_0 = \left( 1 - c_{2,0} - \chi d_{4,0} \right)^2 - 4 d_{5,0} (d_{3,0} + \chi c_{1,0}).
\label{eq:Delta0}
\end{equation}

\begin{theorem}
Suppose $f_\mu$ satisfies \eqref{eq:lambda}, \eqref{eq:sigma}, \eqref{eq:c0}--\eqref{eq:d5}, $a_{1,0} + b_{1,0} = 0$, $\Delta_0 > 0$, and $-1 < c_{2,0} < 1 - \frac{\sqrt{\Delta_0}}{2}$.
Let $\bv \in \mathbb{R}^n$.
If $\bn_{\rm tang}^{\sf T} \bv \ne 0$
then there exists $k_{\rm min} \in \mathbb{Z}$ such that for all $k \in K(k_{\rm min})$
there exist $\ee_k^- <0$ and $\ee_k^+ > 0$ with $\ee_k^\pm = \cO \left( \alpha^{2 k} \right)$
such that $f$ has an asymptotically stable period-$(k+m)$ solution for all $\mu = \ee \bv$ with $\ee_k^- < \ee < \ee_k^+$.
Moreover, one sequence, $\{ \ee_k^- \}$ or $\{ \ee_k^+ \}$, corresponds to saddle-node bifurcations of the periodic solutions, the other to period-doubling bifurcations.
If instead $\bn_{\rm tang}^{\sf T} \bv = 0$ and $\bn_{\rm eig}^{\sf T} \bv \ne 0$ the same results hold now $\ee_k^\pm = \cO \left( \frac{\alpha^k}{k} \right)$.
\label{th:main}
\end{theorem}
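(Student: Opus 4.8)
The plan is to realise the single-round period-$(k+m)$ solutions as fixed points of $F_k := T_0^k \circ T_1$ lying near $(x,y) = (0,1)$, and to follow these fixed points as $\ee$ varies along the ray $\mu = \ee \bv$. The first ingredient is the asymptotic form of $T_0^k$. Since $\lambda\sigma = \chi_{\rm eig}$ and $a_{1,0} + b_{1,0} = 0$ at $\mu = \bO$, the product $xy$ is preserved by $T_0$ up to $\cO(x^2 y^2)$, and the resulting near-invariance (established in \cite{MuMcSi21}) gives $T_0^k(x,y) = \bigl( \lambda^k x (1 + \cdots), \sigma^k y (1 + \cdots) \bigr)$ with correction factors that tend to $1$ uniformly on $\cN$ and whose deviations from $1$ are $\cO(\alpha^k)$. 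Writing $u = y - 1$ and imposing $F_k(x, 1+u) = (x, 1+u)$ then forces the common scaling $x = \cO(\alpha^k)$, $u = \cO(\alpha^k)$, and $\sigma^{-k} = \cO(\alpha^k)$.

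Next I would collapse the two fixed-point equations to one. The $x$-equation $x = \lambda^k X$ can be solved by the implicit function theorem to give $x = \lambda^k(1 + c_2 u + \cdots)$; substituting into the $y$-equation $\sigma^{-k}(1+u) = T_{1,2}(x, 1+u)$ and rearranging produces, to leading order, a quadratic in $u$,
\[
d_5\, u^2 + B\, u + C = 0, \qquad C = \lambda^k\bigl( d_1 - (\lambda\sigma)^{-k} \bigr) + d_0, \qquad B = \cO(\alpha^k).
\]
The decisive algebraic fact is that, at $\mu = \bO$, $(\lambda\sigma)^{-k} = \chi_{\rm eig}^k$ and $d_1 = \chi$, so $C$ vanishes precisely when $\chi_{\rm eig}^k = \chi$, that is, exactly for $k \in K(k_{\rm min})$; this is what selects these periods and leaves the two roots $\cO(\alpha^k)$ from the tangency.

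I would then separate the two bifurcations. The saddle-node is where the two roots coalesce, i.e.\ where the discriminant $B^2 - 4 d_5 C$ vanishes; as $B^2 = \cO(\alpha^{2k})$ this needs $C = \cO(\alpha^{2k})$. For the period-doubling I would use $\rD F_k = \rD T_0^k \cdot \rD T_1$, which has the block form $\mathrm{diag}(\lambda^k, \sigma^k)\, \rD T_1$ up to the correction factors above. Its trace and determinant converge as $k \to \infty$ to finite limits, and the hypotheses $\Delta_0 > 0$ and $-1 < c_{2,0} < 1 - \tfrac{\sqrt{\Delta_0}}{2}$ --- which are precisely the conditions of Theorem \ref{le:sufficientConditions} --- place both eigenvalues strictly inside the unit circle at $\ee = 0$. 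The key amplification is that, because $\sigma^k = \cO(\alpha^{-k})$, a displacement $\delta u = \cO(\alpha^k)$ of the fixed point changes $\mathrm{tr}\, \rD F_k$ by $\cO(1)$; hence as $C$ ranges over a window of width $\cO(\alpha^{2k})$, the governing eigenvalue sweeps an $\cO(1)$ range and crosses both $+1$ (saddle-node) and $-1$ (period-doubling). Both bifurcation conditions therefore reduce to $C$ reaching prescribed values of order $\alpha^{2k}$ on opposite sides of $C = 0$.

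Finally I would read off the scalings by expanding $C$ along $\mu = \ee \bv$. Using $\lambda^k \approx \alpha^k$, $(\lambda\sigma)^k \approx \chi\, e^{k \ee\, \chi_{\rm eig}\, \bn_{\rm eig}^{\sf T} \bv}$ for $k \in K$, and $d_0 \approx \ee\, \bn_{\rm tang}^{\sf T} \bv$, a factorisation gives
\[
C \approx \ee\, \bn_{\rm tang}^{\sf T} \bv + \chi\, \chi_{\rm eig}\, k\, \alpha^k\, \ee\, \bn_{\rm eig}^{\sf T} \bv + \cdots.
\]
Imposing $C = \cO(\alpha^{2k})$ then yields $\ee = \cO(\alpha^{2k})$ when $\bn_{\rm tang}^{\sf T} \bv \ne 0$ (first term dominant), and $\ee = \cO\!\bigl( \tfrac{\alpha^k}{k} \bigr)$ when $\bn_{\rm tang}^{\sf T} \bv = 0$ but $\bn_{\rm eig}^{\sf T} \bv \ne 0$ (second term dominant); since $C$ is monotone in $\ee$ to leading order, the two bifurcations sit at $\ee_k^- < 0 < \ee_k^+$. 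I expect the principal obstacle to be analytic rather than structural: one must bound the correction factors in $T_0^k$ and every discarded remainder uniformly in $k$, so that the reduction to the quadratic and the expansion of $C$ hold with honest error estimates, and one must treat the coupled limit $\ee \to 0$, $k \to \infty$ carefully through the factors $e^{k \ee (\cdots)}$ --- in particular verifying that on the relevant $\ee$-windows these are $1 + o(1)$, so the leading balance that produces the two scaling laws is not corrupted.
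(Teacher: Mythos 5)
Your proposal follows essentially the same route as the paper's proof: single-round orbits as fixed points of $T_0^k \circ T_1$ near $(0,1)$, the asymptotic form of $T_0^k$ (the paper's Lemma \ref{le:T0k}), elimination of the $x$-equation to reduce to a quadratic in $y-1$ whose constant term must be tuned to $\cO\left(\alpha^{2k}\right)$, and the trace/determinant of the Jacobian placing the saddle-node (discriminant zero) and period-doubling (discriminant equal to $4(1-c_{2,0})^2$) on opposite sides of $\ee = 0$ because $0 < \Delta_0 < 4(1-c_{2,0})^2$, with the two scaling laws obtained from exactly the same leading balance between $d_0 \sim \ee\, \bn_{\rm tang}^{\sf T} \bv$ and the $k$-amplified eigenvalue-product term $\sim k \alpha^k \ee\, \bn_{\rm eig}^{\sf T} \bv$. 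The one imprecision is that the correction factors in $T_0^k$ deviate from $1$ by $\cO\left(k |\alpha|^k\right)$, not $\cO\left(\alpha^k\right)$, owing to the resonant contributions $k a_1 x y$ and $k b_1 x y$ --- which is precisely why the hypothesis $a_{1,0} + b_{1,0} = 0$ that you invoke is needed --- but this does not alter the structure of your argument.
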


\section{Proof of the main result}
\label{sec:mainProof}
\setcounter{equation}{0}

To prove Theorem \ref{th:main} we use the following lemma which is proved in Appendix \ref{app:proof1} by carefully estimating the error terms in (7.1)..
If $\chi_{\rm eig} = -1$ then \eqref{eq:T0k} is true if $a_1 = b_1 = 0$
(and this can be proved in the same fashion).

\begin{lemma}
Suppose $\chi_{\rm eig} = 1$ and $\mu = \cO \left( \alpha^k \right)$.
If $|x-1|$ and $\left| \alpha^{-k} y - 1 \right|$ are sufficiently small
for all sufficiently large values of $k$, then
\begin{equation}
T_0^k(x,y) = \begin{bmatrix}
\lambda^k x \left( 1 + k a_1 x y + \cO \left( k^2 \alpha^{2 k} \right) \right) \\
\sigma^k y \left( 1 + k b_1 x y + \cO \left( k^2 \alpha^{2 k} \right) \right)
\end{bmatrix}.
\label{eq:T0k}
\end{equation}
\label{le:T0k}
\end{lemma}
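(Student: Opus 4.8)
The plan is to control the orbit $(x_n,y_n) = T_0^n(x,y)$ by factoring out its leading linear behaviour. I would set $x_n = \lambda^n x\, u_n$ and $y_n = \sigma^n y\, v_n$ with $u_0 = v_0 = 1$, so that proving \eqref{eq:T0k} amounts to showing $u_k = 1 + k a_1 xy + \cO(k^2\alpha^{2k})$ and $v_k = 1 + k b_1 xy + \cO(k^2\alpha^{2k})$. Substituting this ansatz into \eqref{eq:T0} turns the map into the multiplicative recurrences
\[
u_{n+1} = u_n\left(1 + a_1 P_n + \cO\left(P_n^2\right)\right), \qquad v_{n+1} = v_n\left(1 + b_1 P_n + \cO\left(P_n^2\right)\right),
\]
where the whole evolution is driven by the product $P_n := x_n y_n = (\lambda\sigma)^n\, xy\, u_n v_n$. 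Since $\mu = \cO(\alpha^k)$, equation \eqref{eq:nEig} with $\chi_{\rm eig} = 1$ gives $\lambda\sigma = 1 + \cO(\alpha^k)$, and the smallness hypotheses on $|x-1|$ and $\left|\alpha^{-k}y-1\right|$ give $P_0 = xy = \cO(\alpha^k)$.

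The first substantive step is an a priori estimate, established by induction on $n$ over the range $0 \le n \le k$: the orbit remains in the region where the expansion \eqref{eq:T0} is valid, $P_n = \cO(\alpha^k)$ uniformly, and $u_n, v_n = 1 + \cO(k\alpha^k)$. The driving observation is that $\lambda\sigma \approx 1$ pins the product: because $n \le k$ and $k\alpha^k \to 0$, one has $(\lambda\sigma)^n = \left(1 + \cO(\alpha^k)\right)^n = 1 + \cO(k\alpha^k)$, so assuming inductively that $u_j v_j = 1 + \cO(k\alpha^k)$ for $j < n$ yields $P_n = xy\left(1 + \cO(k\alpha^k)\right) = \cO(\alpha^k)$, which in turn feeds back through the recurrences to control $u_n$ and $v_n$ and close the induction. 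In particular this gives the sharper statement $P_n = xy + \cO(k\alpha^{2k})$, which is what the refinement needs.

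With these bounds in hand, the leading correction follows cleanly by taking logarithms in the telescoped product. Writing $u_k = \prod_{n=0}^{k-1}\left(1 + a_1 P_n + \cO(P_n^2)\right)$ and inserting $P_n = xy + \cO(k\alpha^{2k})$ and $P_n^2 = \cO(\alpha^{2k})$, each factor becomes $1 + a_1 xy + \cO(k\alpha^{2k})$, so
\[
\log u_k = \sum_{n=0}^{k-1}\left(a_1 xy + \cO(k\alpha^{2k})\right) = k a_1 xy + \cO(k^2\alpha^{2k}).
\]
Exponentiating, and noting that $k a_1 xy = \cO(k\alpha^k)$ so its square lies within the error, gives $u_k = 1 + k a_1 xy + \cO(k^2\alpha^{2k})$; the computation for $v_k$ is identical. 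Multiplying back through by $\lambda^k x$ and $\sigma^k y$ produces \eqref{eq:T0k}.

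I expect the main obstacle to be the uniform-in-$k$ bookkeeping in the a priori step, namely verifying that the implied constants in every $\cO$ term can be chosen independently of both $n \le k$ and $k$, since one is iterating $k$ times while simultaneously sending $k \to \infty$. This is precisely the discrete-Gronwall/bootstrapping structure the appendix refers to as ``carefully estimating the error terms'': the Taylor remainders in \eqref{eq:T0} must be dominated uniformly along the entire orbit, and one must check that the accumulated error over $k$ steps really stays at $\cO(k^2\alpha^{2k})$ rather than degrading. It is also worth flagging why $\chi_{\rm eig} = 1$ is essential: it forces $(\lambda\sigma)^n \approx 1$ with a fixed sign, so the $a_1 P_n$ contributions accumulate coherently into the factor $k$. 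When $\chi_{\rm eig} = -1$ one instead has $(\lambda\sigma)^n \approx (-1)^n$, the contributions alternate in sign and nearly cancel, and the cumulative $k$-factor disappears unless $a_1 = b_1 = 0$, consistent with the accompanying remark.
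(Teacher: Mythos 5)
Your proposal is correct and follows essentially the same route as the paper's Appendix A: an induction (bootstrap) on the iterate index $j\le k$ that uses $\lambda\sigma = 1+\cO(\alpha^k)$ to pin $x_jy_j$ near $xy=\cO(\alpha^k)$, keeps the orbit in $\cN$, and accumulates the $a_1 x_jy_j$ corrections into the factor $k$. The only difference is cosmetic bookkeeping — you telescope the multiplicative corrections and take logarithms at the end, whereas the paper builds the term $j a_1 xy$ into the ansatz for $T_0^j$ and bounds the residual functions $\tilde F_j,\tilde G_j$ by an explicit $12Mj^2$ — and your flagged concern about uniform constants is exactly where the paper spends its effort.
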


\begin{proof}[Proof of Theorem \ref{th:main}]
\myStep{1}{Coordinate changes to distinguish the surface of homoclinic tangencies.}
First we perform two coordinate changes on parameter space.
There exists an $n \times n$ orthogonal matrix $A$ such that after $\mu$ is replaced with $A \mu$,
and $\mu$ is scaled,
we have $\bn_{\rm tang}^{\sf T} = [1,0,\ldots,0]$ --- the first coordinate vector of $\mathbb{R}^n$.
Then $d_0(\mu) = \mu_1 + \cO \left( \| \mu \|^2 \right)$.
Second, for convenience, we apply a near-identity transformation to remove the higher order terms, resulting in
\begin{equation}
d_0(\mu) = \mu_1 \,.
\label{eq:d0Exp}
\end{equation}
These coordinate changes do not alter the signs of the dot products ${\bf n}_{\rm tang}^{\sf T} {\bf v}$ and ${\bf n}_{\rm eig}^{\sf T} {\bf v}$.  Now write
\begin{align}
c_0(\mu) &= 1 + \sum_{i=1}^n p_i \mu_i + \cO \left( \| \mu \|^2 \right), \label{eq:c0Exp} \\
d_1(\mu) &= \chi + \sum_{i=1}^n q_i \mu_i + \cO \left( \| \mu \|^2 \right), \label{eq:d1Exp} \\
d_2(\mu) &= \sum_{i=1}^n r_i \mu_i + \cO \left( \| \mu \|^2 \right), \label{eq:d2Exp}
\end{align}
\begin{align}
\lambda(\mu) &= \alpha + \sum_{i=1}^n s_i \mu_i + \cO \left( \| \mu \|^2 \right), \label{eq:lambdaExp} \\
\sigma(\mu) &= \frac{\chi_{\rm eig}}{\alpha} + \sum_{i=1}^n t_i \mu_i + \cO \left( \| \mu \|^2 \right), \label{eq:sigmaExp}
\end{align}
where $p_i,\ldots,t_i \in \mathbb{R}$ are constants.
\myStep{2}{Apply a $k$-dependent scaling to $\mu$.}  In view of the coordinate changes applied in the previous step, the surface of homoclinic surfaces of Lemma 5.1 is tangent to the $\mu_1 = 0$ coordinate hyperplane.  In order to show that bifurcation values scale like $|\alpha|^{2 k}$ if we adjust the value of $\mu$ in a direction transverse to this surface, and, generically, scale like $\frac{|\alpha|^k}{k}$ otherwise, we scale the components of $\mu$ as follows:
\begin{equation}
\mu_i = \begin{cases}
\alpha^{2 k} \tilde{\mu}_i \,, & i = 1, \\
\alpha^k \tilde{\mu}_i \,, & i \ne 1.
\end{cases}
\label{eq:muScaling}
\end{equation}
Below we will see that the resulting asymptotic expansions are consistent and this will justify \eqref{eq:muScaling}.  Notice that $\tilde{\mu}_1$-terms are higher order than $\tilde{\mu}_i$-terms, for $i \ne 1$.
For example \eqref{eq:lambdaExp} now becomes $\lambda = \alpha + \sum_{i=2}^n s_i \tilde{\mu}_i \alpha^k + \cO \left( |\alpha|^{2 k} \right)$.
Further, let $k$ be such that $\lambda(\bO)^k \sigma(\bO)^k = d_1(\bO)$, that is $\chi_{\rm eig}^k = \chi$.
Then from \eqref{eq:lambdaExp}--\eqref{eq:muScaling} we obtain
\begin{align}
\lambda^k &= \alpha^k \left( 1 + \frac{k}{\alpha} \sum_{i=2}^n s_i \tilde{\mu}_i \alpha^k
+ \cO \left( k^2 |\alpha|^{2 k} \right) \right), \label{eq:lambdak} \\
\sigma^k &= \frac{\chi}{\alpha^k} \left( 1 + k \alpha \chi_{\rm eig} \sum_{i=2}^n t_i \tilde{\mu}_i \alpha^k
+ \cO \left( k^2 |\alpha|^{2 k} \right) \right). \label{eq:sigmak} \\
\end{align}

\myStep{3}{Calculate one point of each periodic solution.}  One point of a single-round periodic solution is a fixed point of $T_0^k \circ T_1$. We look for fixed points $\left( x^{(k)}, y^{(k)} \right)$ of $T_0^k \circ T_1$ of the form
\begin{equation}
\begin{split}
x^{(k)} &= \alpha^k \left( 1 + \phi_k \alpha^k + \cO \left( k^2 |\alpha|^{2 k} \right) \right), \\ 
y^{(k)} &= 1 + \psi_k \alpha^k + \cO \left( k^2 |\alpha|^{2 k} \right). 
\label{eq:xkyk}
\end{split}
\end{equation}
By substituting \eqref{eq:xkyk} into \eqref{eq:T1}
and the above various asymptotic expressions for the coefficients in $T_1$, we obtain
\begin{equation}
T_1 \left( x^{(k)}, y^{(k)} \right) =
\begin{bmatrix}

1 + \left( c_{1,0} + c_{2,0} \psi_k + \sum_{i=2}^n p_i \tilde{\mu}_i \right) \alpha^k + \cO \left( k^2 |\alpha|^{2 k} \right)\\

\begin{aligned}
\chi \alpha^k + \left( \tilde{\mu}_1 + d_{3,0} + d_{4,0} \psi_k + d_{5,0} \psi_k^2 + \chi \phi_k
+ \sum_{i=2}^n \left( q_i + r_i \psi_k \right) \tilde{\mu}_i \right) \alpha^{2 k}&\\[2mm]
+ \cO \left( k^2 |\alpha|^{3 k} \right)&
\end{aligned}
\end{bmatrix}.
\nonumber
\end{equation}
Then by \eqref{eq:T0k},
{\footnotesize
\begin{equation}
\left( T_0^k \circ T_1 \right) \left( x^{(k)}, y^{(k)} \right) =
\begin{bmatrix}
\begin{aligned}
\alpha^k + \bigg( a_{1,0} \chi + \frac{1}{\alpha} \sum_{i=2}^n s_i \tilde{\mu}_i \bigg) k \alpha^{2 k}
+ \bigg( c_{1,0} + c_{2,0} \psi_k + \sum_{i=2}^n p_i \tilde{\mu}_i \bigg) \alpha^{2 k} &\\[2mm]
+ \cO \left( k^2 |\alpha|^{3 k} \right) &
\end{aligned}\\
\begin{aligned}
1 + \bigg( b_{1,0} \chi + \alpha \chi_{\rm eig} \sum_{i=2}^n t_i \tilde{\mu}_i \bigg) k \alpha^k
+ \bigg( \chi \tilde{\mu}_1 + \chi d_{3,0} + \chi d_{4,0} \psi_k &\\
+ \chi d_{5,0} \psi_k^2 + \phi_k + \chi \sum_{i=2}^n \left( q_i + r_i \psi_k \right) \tilde{\mu}_i \bigg) \alpha^k + \cO \left( k^2 |\alpha|^{2 k} \right) &
\end{aligned}
\end{bmatrix}.
\label{eq:T0kT1xkykun}
\end{equation}
}
By matching \eqref{eq:xkyk} and \eqref{eq:T0kT1xkykun} and eliminating $\phi_k$
we obtain the following expression that determines the possible values of $\psi_k$:
\begin{equation}
\chi d_{5,0} \psi_k^2 - P \psi_k + Q=0,
\label{eq:psikQuadratic}
\end{equation}
where
\begin{align}
P &= 1 - c_{2,0} - \chi d_{4,0} - \chi \sum_{i=2}^n r_i \tilde{\mu}_i \,, \label{eq:P} \\
Q &= \chi \tilde{\mu}_1 + c_{1,0} + \chi d_{3,0} + \sum_{i=2}^n \left( p_i + \chi q_i \right) \tilde{\mu}_i
+ \sum_{i=2}^n \left( \frac{s_i}{\alpha} + \alpha \chi_{\rm eig} t_i \right) \tilde{\mu}_i k, \label{eq:Q}
\end{align}
and we have also used $a_{1,0} + b_{1,0} = 0$.
Of the two solutions to \eqref{eq:psikQuadratic}, the one that yields an asymptotically stable solution when $\mu = \bO$ is
\begin{equation}
\psi_k = \frac{1}{2 \chi d_{5,0}} \left( P - \sqrt{P^2 - 4 \chi d_{5,0} Q} \right).
\label{eq:psik}
\end{equation}
Note that this solution exists and is real-valued for sufficiently small values of $\mu$ because when $\mu = \bO$
the discriminant is $P^2 - 4 \chi d_{5,0} Q = \Delta_0 > 0$.

\myStep{4}{Stability of the periodic solution.} By using \eqref{eq:T1}, \eqref{eq:T0k}, \eqref{eq:lambdak}, \eqref{eq:sigmak}, and \eqref{eq:xkyk},
\begin{equation}
\rD \left( T_0^k \circ T_1 \right) \left( x^{(k)}, y^{(k)} \right) =
\begin{bmatrix}
\cO \left( |\alpha|^k \right) &
c_{2,0} \alpha^k + \cO \left( k |\alpha|^{2 k} \right) \\
\frac{1}{\alpha^k} \left( 1 + \cO \left( k |\alpha|^k \right) \right) &
\chi d_{4,0} + \chi \sum_{i=2}^n r_i \tilde{\mu}_i + 2 \chi d_{5,0} \psi_k + \cO \left( k |\alpha|^k \right)
\end{bmatrix}.
\label{eq:Jacobian}
\end{equation}
Let $\tau_k$ and $\delta_k$ denote the trace and determinant of this matrix, respectively.
By \eqref{eq:P}, \eqref{eq:psik}, and \eqref{eq:Jacobian} we obtain
\begin{align}
\tau_k &= 1 - c_{2,0} - \sqrt{P^2 - 4 \chi d_{5,0} Q} + \cO \left( k |\alpha|^k \right), \label{eq:tauk} \\
\delta_k &= -c_{2,0} + \cO \left( k |\alpha|^k \right). \label{eq:deltak}
\end{align}
By substituting $\mu = \bO$ into \eqref{eq:tauk}
we obtain $\tau_k = 1 - c_{2,0} - \sqrt{\Delta_0}$.
It immediately follows from the assumption $-1 < c_{2,0} < 1 - \frac{\sqrt{\Delta_0}}{2}$
that the periodic solution is asymptotically stable for sufficiently large values of $k$.

\myStep{5}{The generic case ${\bf n}_{\rm tang}^{\sf T} {\bf v} \ne 0$.} Now suppose $\bn_{\rm tang}^{\sf T} \bv \ne 0$, that is, $v_1 \ne 0$ (in view of the earlier coordinate change).
Write $\mu = \ee \bv$ and $\ee = \tilde{\ee} \alpha^{2 k}$.
By \eqref{eq:muScaling}, $\tilde{\mu}_1 = \tilde{\ee} v_1$ and $\tilde{\mu}_i = \tilde{\ee} v_i \alpha^k$ for $i \ne 1$.
Then by \eqref{eq:P} and \eqref{eq:Q}, $P = 1 - c_{2,0} - \chi d_{4,0} + \cO \left( |\alpha|^k \right)$
and $Q = c_{1,0} + \chi d_{3,0} + \chi \tilde{\ee} v_1 + \cO \left( |\alpha|^k \right)$, so
\begin{equation}
P^2 - 4 \chi d_{5,0} Q = \left( 1 - c_{2,0} - \chi d_{4,0} \right)^2
- 4 \chi d_{5,0} \left( c_{1,0} + \chi d_{3,0} + \chi \tilde{\ee} v_1 \right) + \cO \left( |\alpha|^k \right).
\label{eq:DeltaCase1}
\end{equation}
Since $v_1 \ne 0$ and $d_{5,0} \ne 0$ we can solve $\delta_k - \tau_k + 1 = 0$ for $\tilde{\ee}$
(formally this is achieved via the implicit function theorem) and the solution is
\begin{equation}
\tilde{\ee}_{\rm SN} = \frac{\Delta_0}{4 d_{5,0} v_1} + \cO \left( k |\alpha|^k \right).
\label{eq:tildeeeSNCase1}
\end{equation}
Also we can use \eqref{eq:DeltaCase1} to solve $\delta_k + \tau_k + 1 = 0$ for $\tilde{\ee}$:
\begin{equation}
\tilde{\ee}_{\rm PD} = \frac{\Delta_0 - 4 \left( 1 - c_{2,0} \right)^2}{4 d_{5,0} v_1} + \cO \left( k |\alpha|^k \right).
\label{eq:tildeeePDCase1}
\end{equation}
Since $\tilde{\ee}_{\rm SN}$ and $\tilde{\ee}_{\rm PD}$ evidently have opposite signs, this completes the proof in the first case.

\myStep{6}{The degenerate case ${\bf n}_{\rm tang}^{\sf T} {\bf v} = 0$.}  Now suppose $v_1 = 0$ and $\bn_{\rm eig}^{\sf T} \bv \ne 0$.
Again write $\mu = \ee \bv$ but now write $\ee = \frac{\tilde{\ee} \alpha^k}{k}$.
By \eqref{eq:muScaling}, $\tilde{\mu}_1 = 0$ and $\tilde{\mu}_i = \frac{\tilde{\ee} v_i}{k}$ for $i \ne 1$.

We first evaluate $\bn_{\rm eig}^{\sf T} \bv$.
By multiplying \eqref{eq:lambdaExp} and \eqref{eq:sigmaExp} and comparing the result to \eqref{eq:nEig}
we see that the $i^{\rm th}$ element of $\bn_{\rm eig}$
is $\frac{\chi_{\rm eig} s_i}{\alpha} + \alpha t_i$.
Then since $v_1 = 0$ we have
$\bn_{\rm eig}^{\sf T} \bv = \sum_{i=2}^n \left( \frac{\chi_{\rm eig} s_i}{\alpha} + \alpha t_i \right) v_i$.
Further, $\mu = \ee \bv$ and $\ee = \frac{\tilde{\ee} \alpha^k}{k}$,
\begin{equation}
\chi_{\rm eig} \tilde{\ee} \bn_{\rm eig}^{\sf T} \bv
= \sum_{i=2}^n \left( \frac{s_i}{\alpha} + \alpha \chi_{\rm eig} t_i \right) \tilde{\mu}_i k,
\nonumber
\end{equation}
which is a term appearing in \eqref{eq:Q}.
So by \eqref{eq:P} and \eqref{eq:Q}, $P = 1 - c_{2,0} - \chi d_{4,0} + \cO \left( \frac{1}{k} \right)$ and $Q = c_{1,0} + \chi d_{3,0} + \frac{\tilde{\ee} \chi_{\rm eig} \bn_{\rm eig}^{\sf T} \bv}{k} + \cO \left( \frac{1}{k} \right)$.
By solving $\delta_k - \tau_k + 1 = 0$,
\begin{equation}
\tilde{\ee}_{\rm SN} = \frac{\Delta_0}{4 d_{5,0} \chi \chi_{\rm eig} \bn_{\rm eig}^{\sf T} \bv} + \cO \left( \frac{1}{k} \right),
\label{eq:tildeeeSNCase2}
\end{equation}
and by solving $\delta_k - \tau_k + 1 = 0$,
\begin{equation}
\tilde{\ee}_{\rm PD} = \frac{\Delta_0 - 4 \left( 1 - c_{2,0} \right)^2}{4 d_{5,0} \chi \chi_{\rm eig} \bn_{\rm eig}^{\sf T} \bv} + \cO \left( \frac{1}{k} \right).
\label{eq:tildeeePDCase2}
\end{equation}
\end{proof}
As in the previous case, $\tilde{\ee}_{\rm SN}$ and $\tilde{\ee}_{\rm PD}$ have opposite signs, and this completes the proof in the second case.  Notice how the assumptions we have made ensure the denominators of \eqref{eq:tildeeeSNCase2} and \eqref{eq:tildeeePDCase2} are nonzero.

\section{A comparison to numerically computed bifurcation values.}
\label{sec:numerics}
\setcounter{equation}{0}

Here we extend the example given in \cite{MuMcSi21} and
illustrate the results with the following $C^1$ family of maps
\begin{equation}
f(x,y) = \begin{cases}
U_0(x,y), & y \le \frac{2 \alpha + 1}{3}, \\
(1 - r(y)) U_0(x,y) + r(y) U_1(x,y), & \frac{2 \alpha + 1}{3} \le y \le \frac{\alpha + 2}{3}, \\
U_1(x,y), & y \ge \frac{\alpha + 2}{3},
\end{cases}
\label{eq:toy}
\end{equation}
where
\begin{align}
U_0(x,y) &= \begin{bmatrix}
(\alpha + \mu_2) x \big( 1 + (a_{1,0} + \mu_4) x y \big) \\
\frac{1}{\alpha} y (1 - a_{1,0} x y)
\end{bmatrix},\\
\label{eq:U0}
U_1(x,y) &= \begin{bmatrix}
1 + c_{2,0} (y-1) \\
\mu_1 + (1 + \mu_3) x + d_{5,0} (y-1)^2
\end{bmatrix},\\
r(y) &= s \left( \frac{y - h_0}{h_1 - h_0} \right),
\end{align}
where
\begin{equation}
s(z) = 3 z^2 - 2 z^3,\\
\label{eq:s}
\end{equation}
Below we fix
\begin{equation}
\begin{split}
\alpha &= 0.8, \\
a_{1,0} &= 0.2, \\
c_{2,0} &= -0.5, \\
d_{5,0} &= 1,
\end{split}
\label{eq:toyParameters}
\end{equation}
and vary $\mu = (\mu_1,\mu_2,\mu_3,\mu_4) \in \mathbb{R}^4$.

\begin{figure}
\begin{center}
\includegraphics[width=12cm]{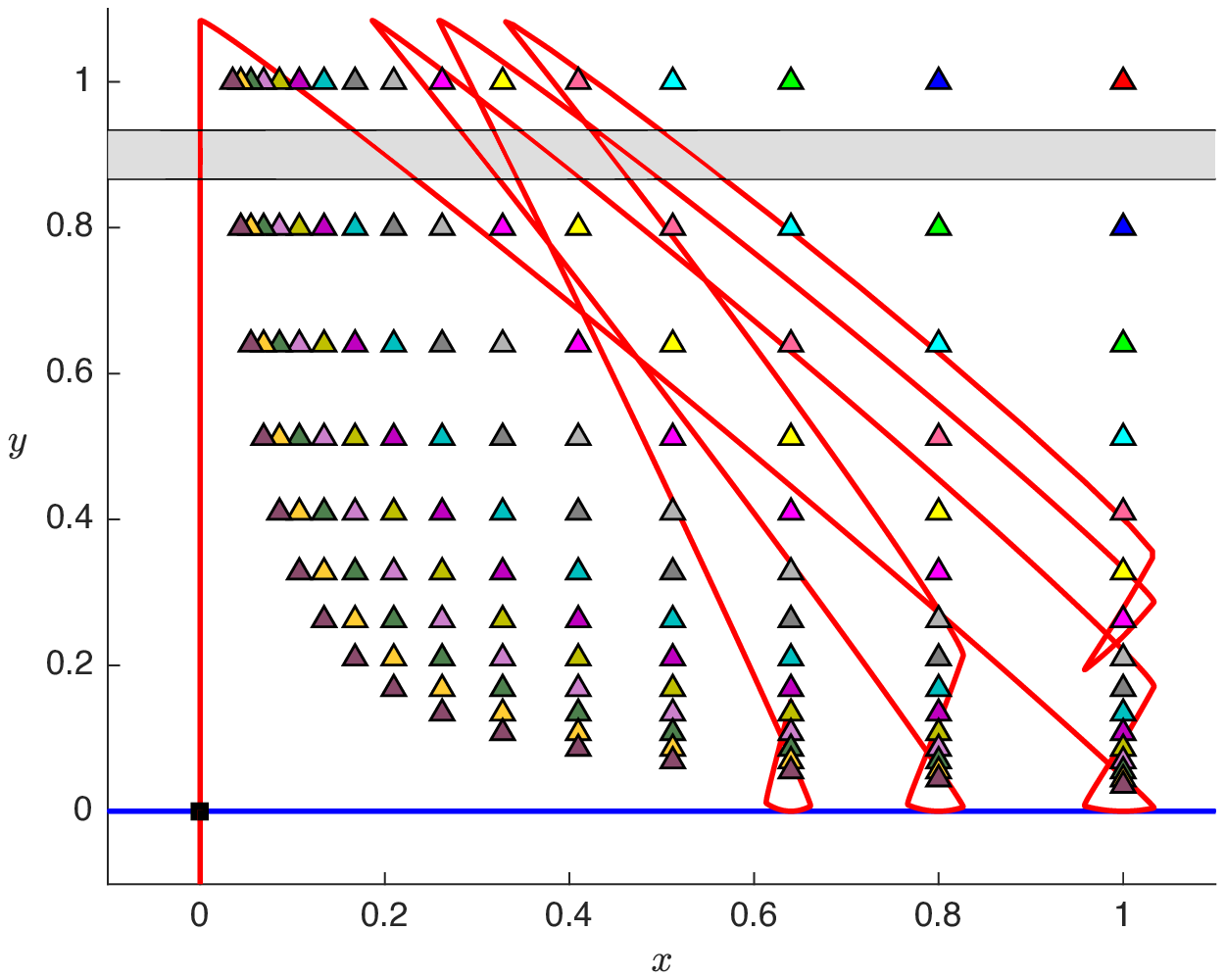}\\
\includegraphics{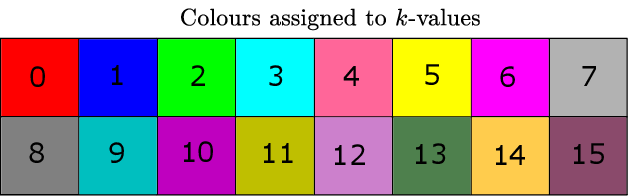}
\caption{A phase portrait of \eqref{eq:toy} with \eqref{eq:toyParameters} and $\mu = \bO$.  The shaded horizontal strip is where the middle component of \eqref{eq:toy} applies.  We show parts of the stable and unstable manifolds of $(x,y) = (0,0)$.  Note the unstable manifold has very high curvature at $(x,y) \approx (0,1.1)$ because \eqref{eq:toy} is highly nonlinear in the horizontal strip.  For the given parameter values \eqref{eq:toy} has an asymptotically stable, single-round periodic solutions of period $k+1$ for all $k \ge 1$.  These are shown for $k = 1,2,\ldots,15$; different colours correspond to different values of $k$.  The map also has an asymptotically stable fixed point at $(x,y) = (1,1)$.
\label{fig:HTSRPUNF}
} 
\end{center}
\end{figure}

With $\mu = \bO$, \eqref{eq:toy} satisfies the conditions of Theorem \ref{le:sufficientConditions}. In particular $\Delta_0 = 2.25$ so $\Delta_0 > 0$ and $-1 < c_{2,0} < 1 - \frac{\Delta_0}{2}$.  Therefore \eqref{eq:toy} has an asymptotically stable single-round periodic solution for all sufficiently large values of $k$.
In fact these exist for all $k \ge 1$, see Fig.~\ref{fig:HTSRPUNF}, plus there exists an asymptotically stable fixed point at $(x,y) = (1,1)$ that can be interpreted as corresponding to $k=0$.\\
\begin{figure}[b!]
\begin{center}
\setlength{\unitlength}{1cm}
\begin{picture}(14,5.1)
\put(0,0){\includegraphics[width=6.8cm]{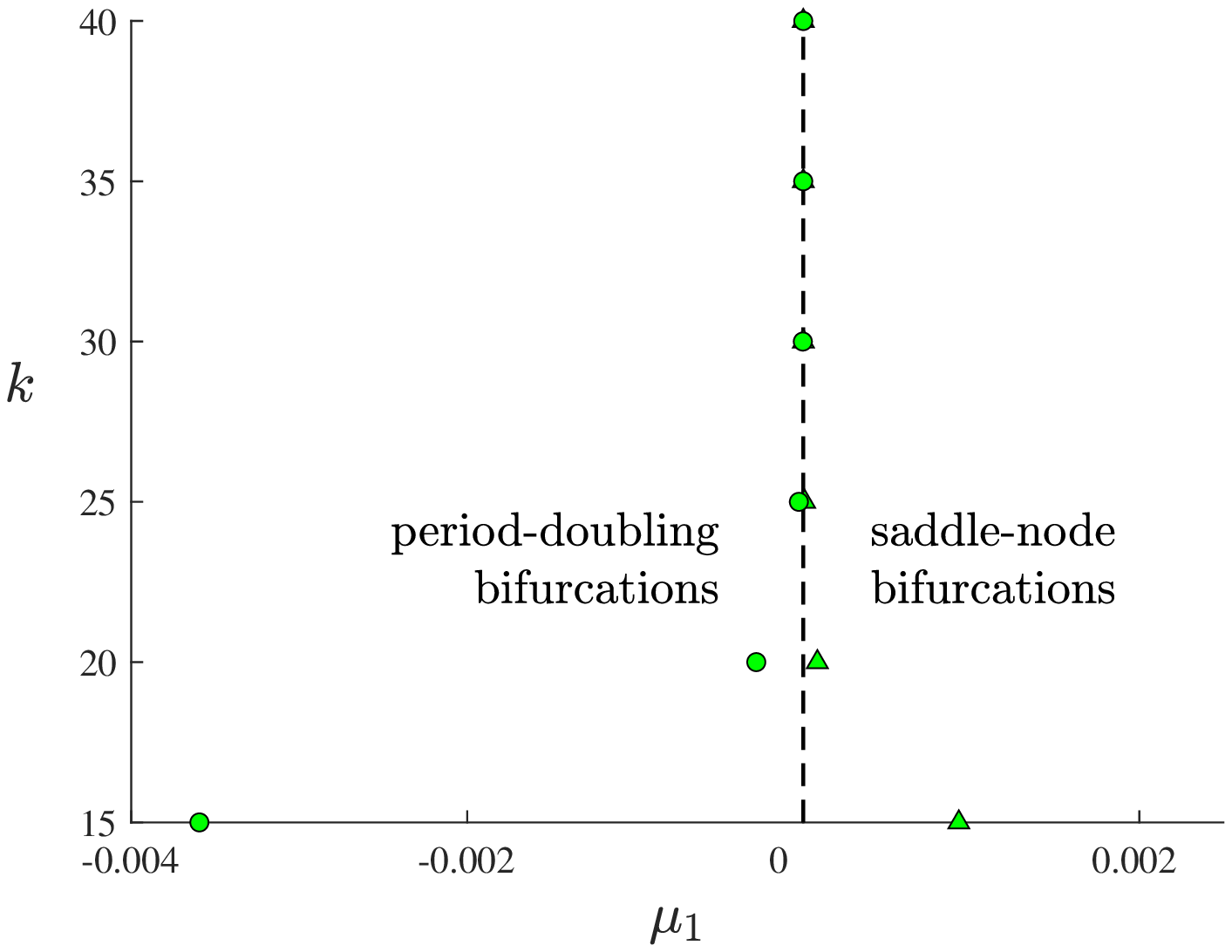}}
\put(7.2,0){\includegraphics[width=6.8cm]{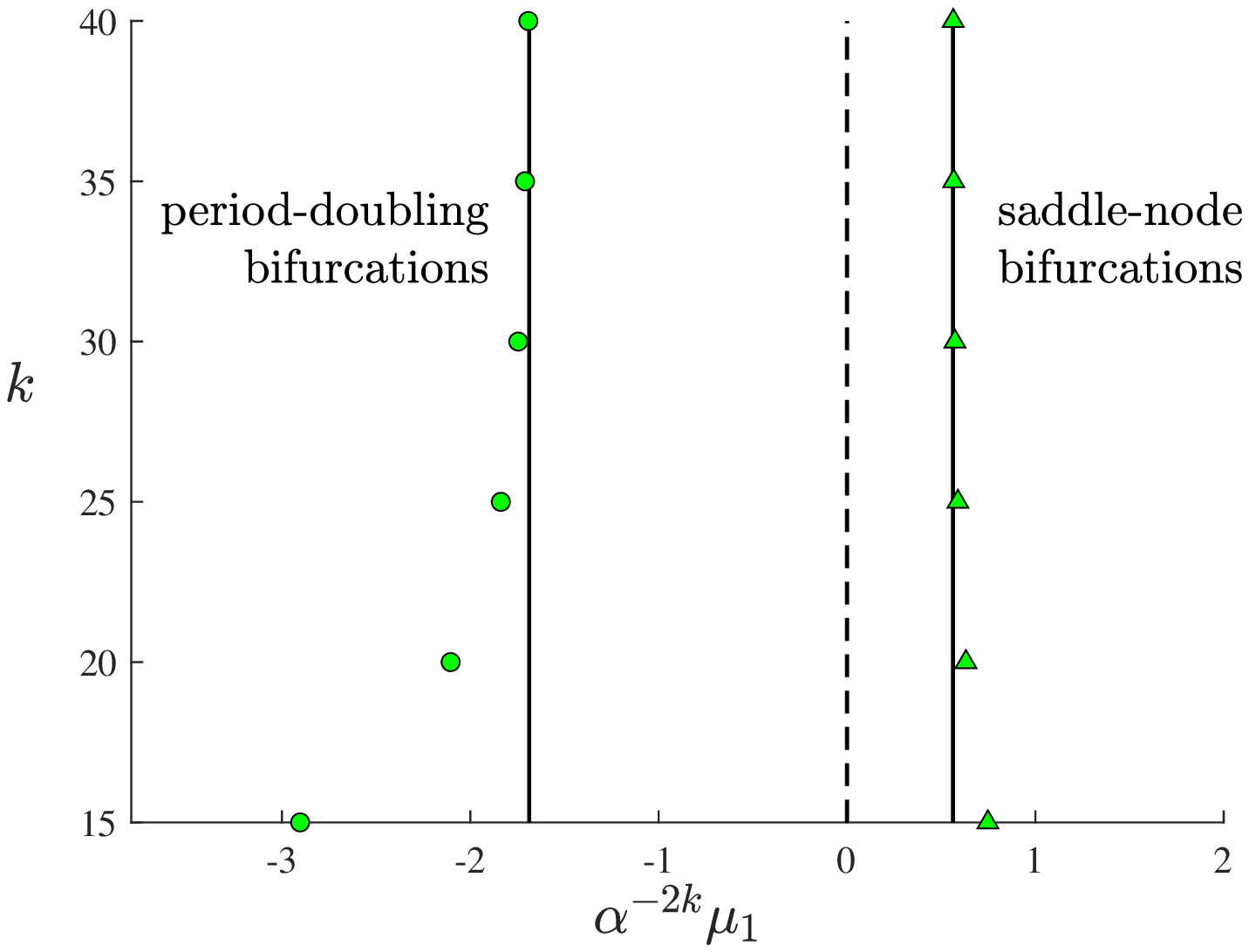}}
\put(0,5.1){\small \bf a)}
\put(7.2,5.1){\small \bf b)}
\end{picture}
\caption{Panel (a) is a numerically computed bifurcation diagram of \eqref{eq:toy} with \eqref{eq:toyParameters} and $\mu_2 = \mu_3 = \mu_4 = 0$.  The triangles [circles] are saddle-node [period-doubling] bifurcations of single-round periodic solutions of period $k+1$.  Panel (b) shows the same points but with the horizontal axis scaled in such a way that the asymptotic approximations to these bifurcations, given by the leading-order terms in \eqref{eq:tildeeeSNCase1} and \eqref{eq:tildeeePDCase1}, appear as vertical lines.
\label{fig:A}
} 
\end{center}
\end{figure}
In this remainder of this section we study how the infinite coexistence is destroyed by varying each the components of $\mu$ from zero in turn.  We identify saddle-node and period-doubling bifurcations numerically and compare these to our above asymptotic results. First, by varying the value of $\mu_1$ from zero we destroy the homoclinic tangency.
Indeed in \eqref{eq:nTang} we have ${\bf n}_{\rm tang}^{\sf T} = [1,0,0,0]$.
Thus if we fix $\mu_2 = \mu_3 = \mu_4 = 0$ and vary the value of $\mu_1$,
by Theorem \ref{th:main} there must exist sequences of saddle-node and period-doubling bifurcations
occurring at values of $\mu_1$ that are asymptotically proportional to $\alpha^{2 k}$.
Fig.~\ref{fig:A}a shows the bifurcation values (obtained numerically) for six different values of $k$.
We have designed \eqref{eq:toy} so that it satisfies \eqref{eq:d0Exp}.  Consequently the formulas \eqref{eq:tildeeeSNCase1} and \eqref{eq:tildeeePDCase1} for the bifurcation values can be applied directly.  In panel (b) we observe that the numerically computed bifurcation values indeed  converge to their leading-order approximations.

\begin{figure}[b!]
\begin{center}
\setlength{\unitlength}{1cm}
\begin{picture}(14,5.1)
\put(0,0){\includegraphics[width=6.8cm]{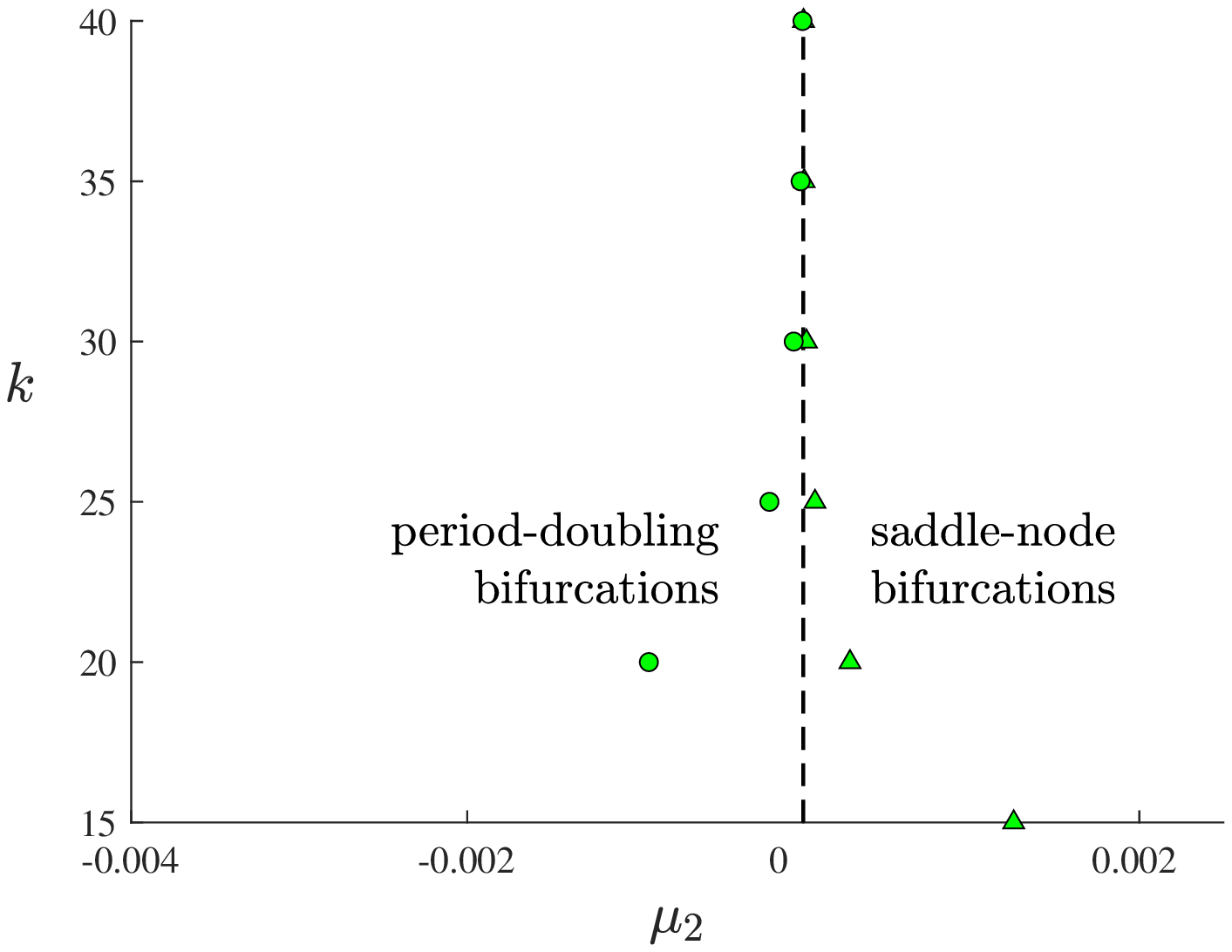}}
\put(7.2,0){\includegraphics[width=6.8cm]{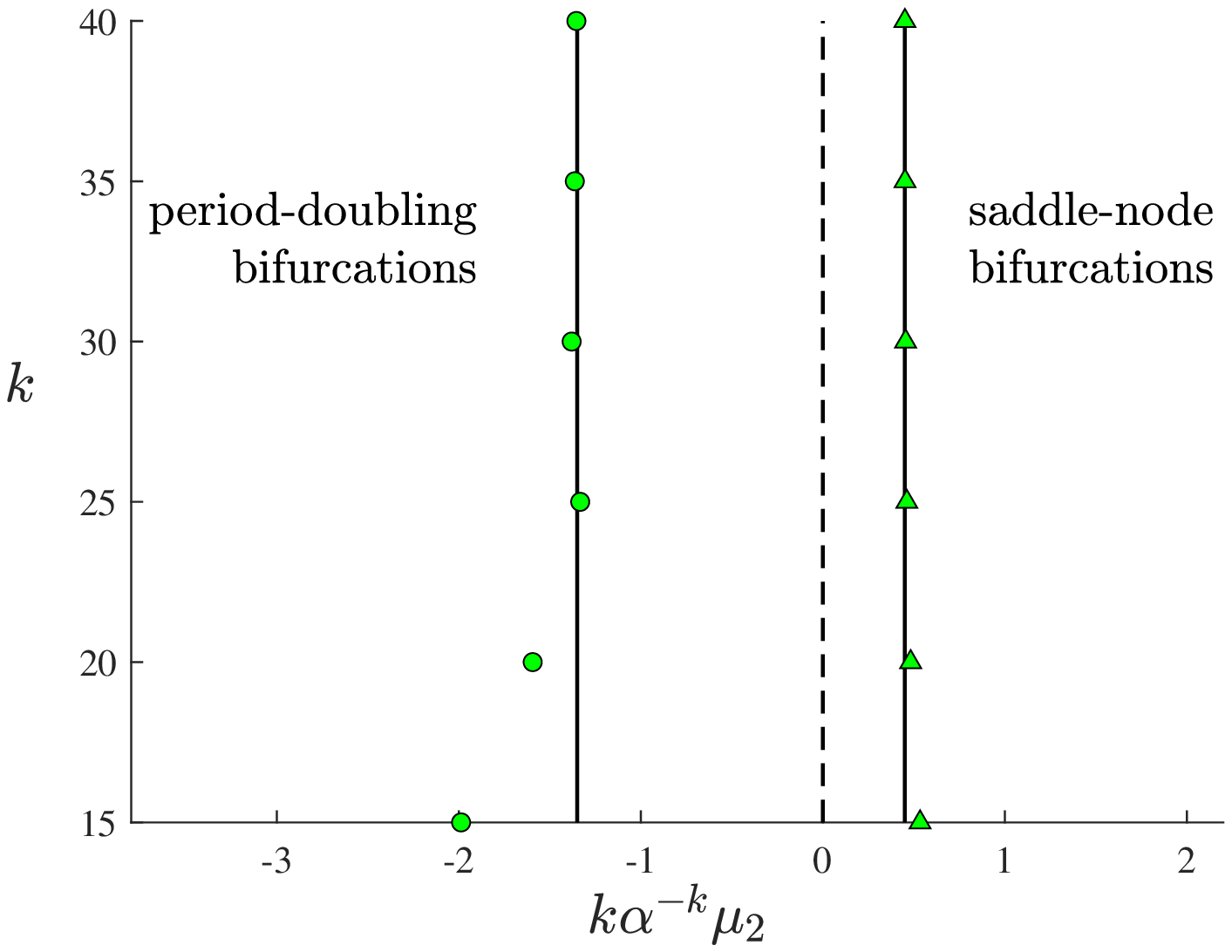}}
\put(0,5.1){\small \bf a)}
\put(7.2,5.1){\small \bf b)}
\end{picture}
\caption{Panel (a) is a numerically computed bifurcation diagram of \eqref{eq:toy} with \eqref{eq:toyParameters} and $\mu_1 = \mu_3 = \mu_4 = 0$.  Panel (b) shows convergence to the leading-order terms of \eqref{eq:tildeeeSNCase2} and \eqref{eq:tildeeePDCase2}.
\label{fig:B}
} 
\end{center}
\end{figure}

We now fix $\mu_1 = \mu_3 = \mu_4=0$ and vary the value of $\mu_2$.
This parameter variation alters the product of the eigenvalues associated with the origin.
Specifically ${\bf n}_{\rm eig}^{\sf T} = \left[ 0,\frac{1}{\alpha},0,0 \right]$ in \eqref{eq:nEig}
so by Theorem \ref{th:main} the bifurcation values are asymptotically proportional to $\frac{\alpha^k}{k}$.
In Fig.~\ref{fig:B} we see the numerically computed bifurcation values converging to their leading order approximations \eqref{eq:tildeeeSNCase2} and \eqref{eq:tildeeePDCase2}.

\begin{figure}[b!]
\begin{center}
\setlength{\unitlength}{1cm}
\begin{picture}(14,5.1)
\put(0,0){\includegraphics[width=6.8cm]{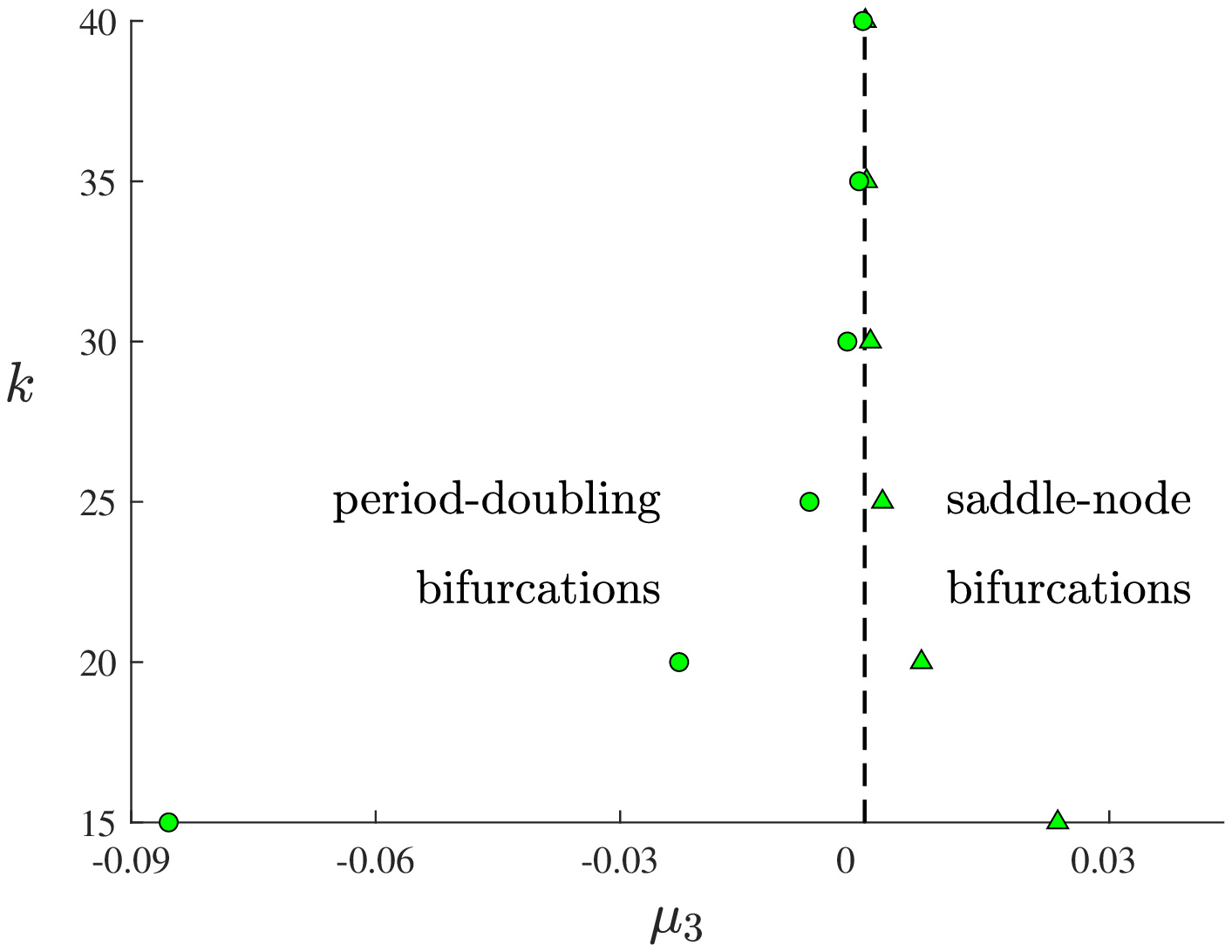}}
\put(7.2,0){\includegraphics[width=6.8cm]{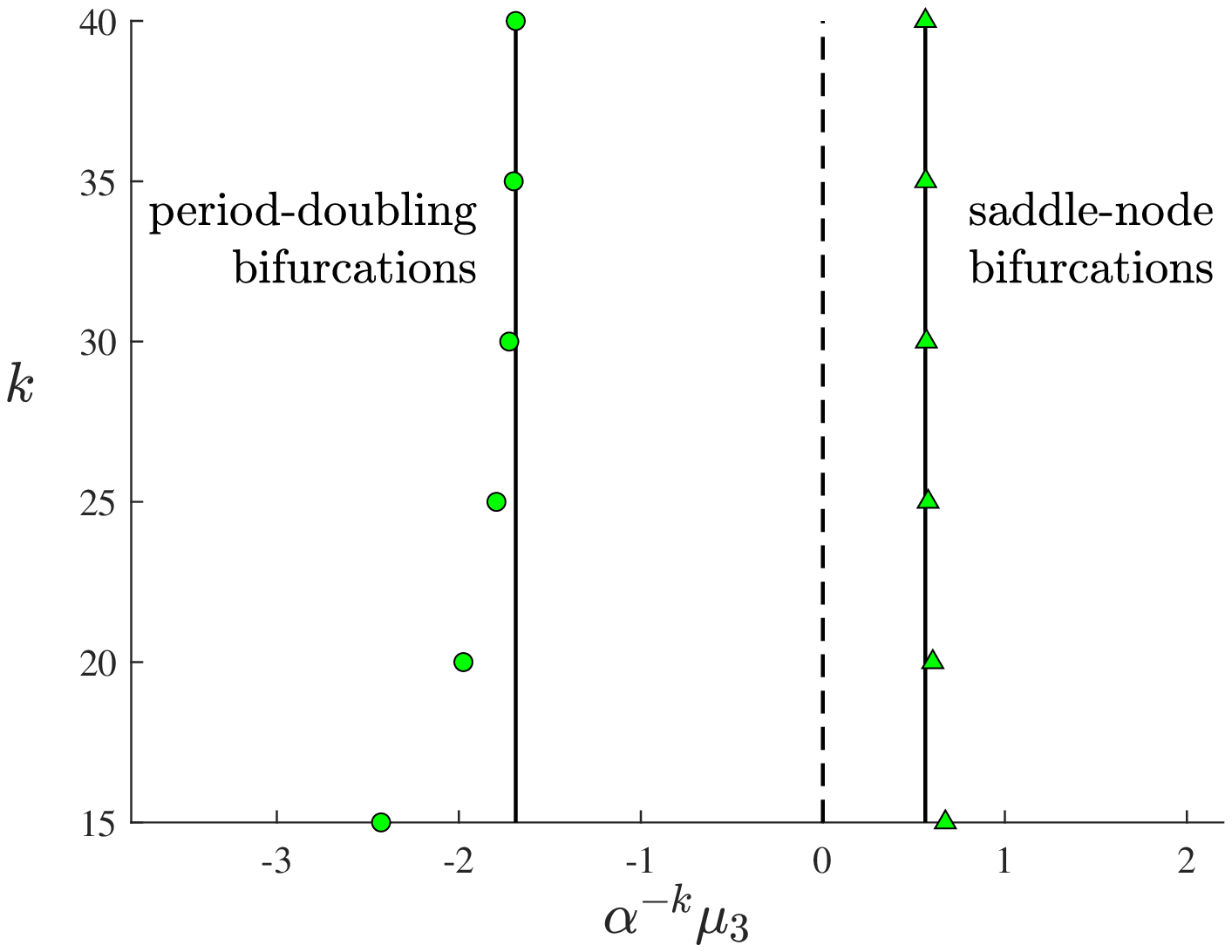}}
\put(0,5.1){\small \bf a)}
\put(7.2,5.1){\small \bf b)}
\end{picture}
\caption{Panel (a) is a numerically computed bifurcation diagram of \eqref{eq:toy} with \eqref{eq:toyParameters} and $\mu_1 = \mu_2 = \mu_4 = 0$.  Panel (b) shows convergence to the leading-order terms of \eqref{eq:CO3SN} and \eqref{eq:CO3PD}.
\label{fig:C}
} 
\end{center}
\end{figure}

Next we fix $\mu_1 = \mu_2 = \mu_4 = 0$ and vary the value of $\mu_3$ which breaks the global resonance condition.
Here ${\bf n}_{\rm tang}^{\sf T} {\bf v} = 0$ and ${\bf n}_{\rm eig}^{\sf T} {\bf v} = 0$ so Theorem \eqref{th:main} does not apply.  But by performing calculations analogous to those given above in the proof of Theorem \ref{th:main} directly to the map \eqref{eq:toy}, 
we obtain the following expressions for the saddle-node and period-doubling bifurcation values
\begin{equation}
    \epsilon_{\rm SN} = \frac{(1-c_{2,0})^2}{4d_{5,0}} \alpha^{k} + \mathcal{O}(|\alpha|^{2k}),
    \label{eq:CO3SN}
\end{equation}

\begin{equation}
    \epsilon_{\rm PD} = -\frac{3(1-c_{2,0})^2}{4d_{5,0}} \alpha^{k}+ \mathcal{O}(|\alpha|^{2k}).
    \label{eq:CO3PD}
\end{equation}
Fig.~\ref{fig:C} shows that the numerically computed bifurcations do indeed appear to be converging to the leading-order components of \eqref{eq:CO3SN} and \eqref{eq:CO3PD}.
Notice the bifurcation values are asymptotically proportional to $\alpha^k$
(a slightly slower rate than that in Fig.~\ref{fig:B}).

\begin{figure}[b!]
\begin{center}
\setlength{\unitlength}{1cm}
\begin{picture}(14,5.1)
\put(0,0){\includegraphics[width=6.8cm]{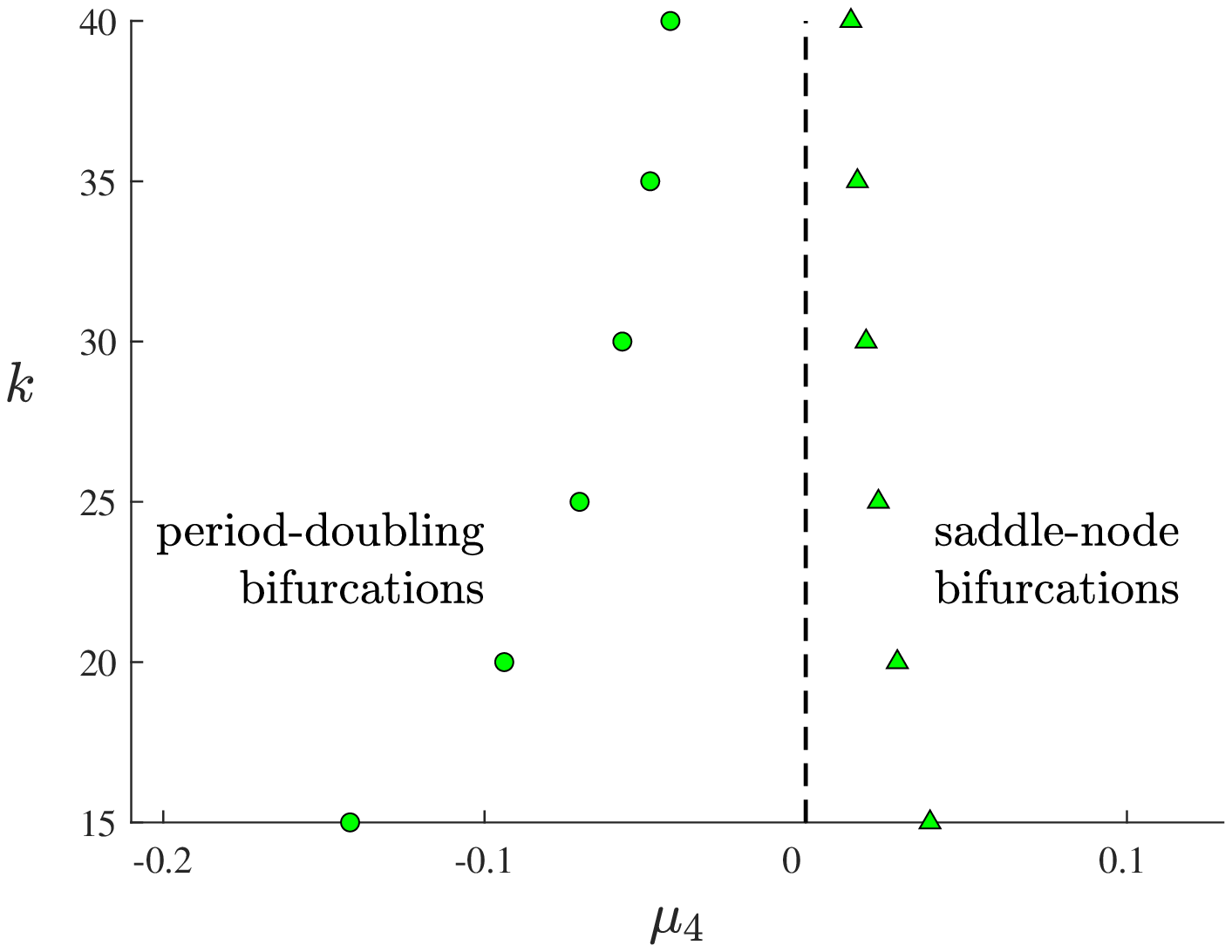}}
\put(7.2,0){\includegraphics[width=6.8cm]{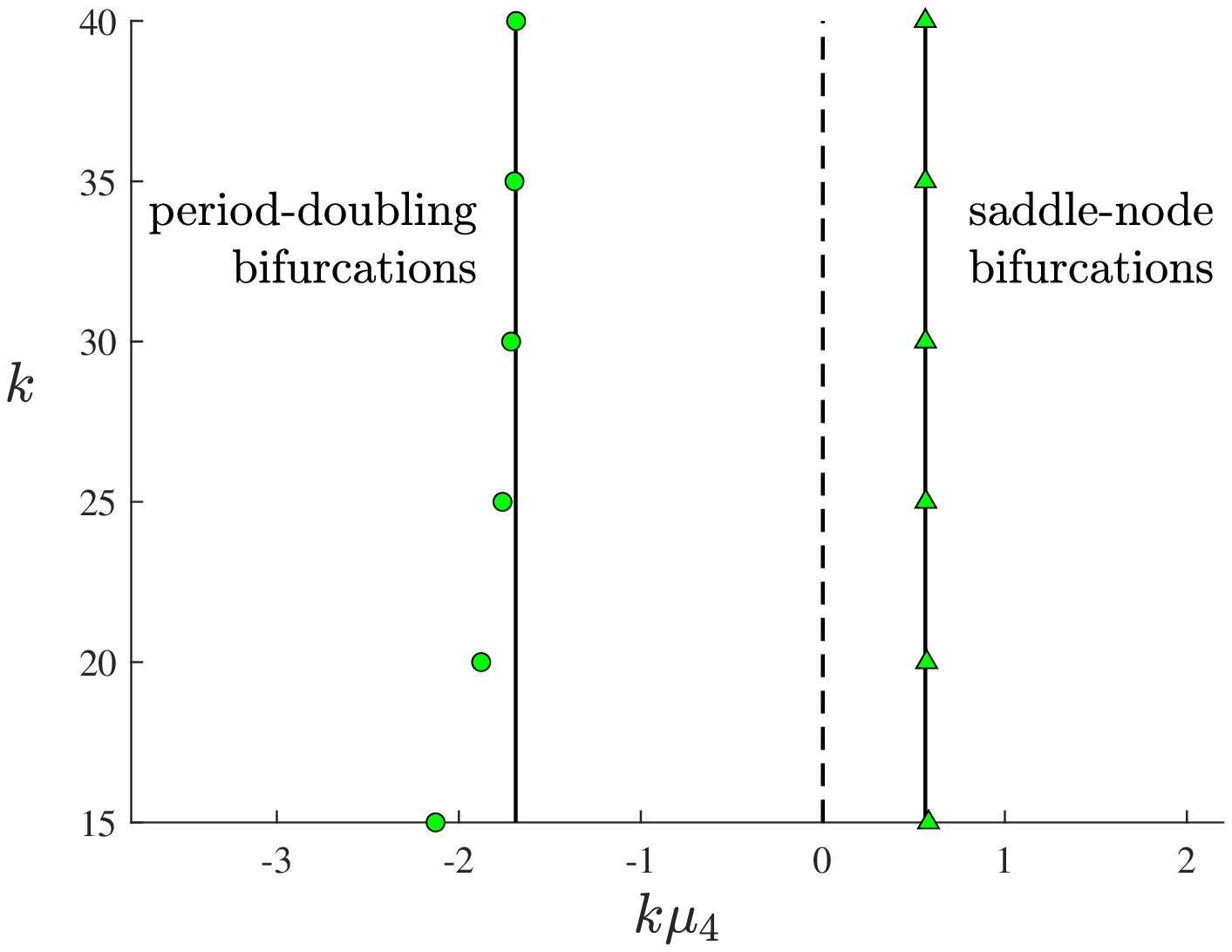}}
\put(0,5.1){\small \bf a)}
\put(7.2,5.1){\small \bf b)}
\end{picture}
\caption{Panel (a) is a numerically computed bifurcation diagram of \eqref{eq:toy} with \eqref{eq:toyParameters} and $\mu_1 = \mu_2 = \mu_3 = 0$.  Panel (b) shows convergence to the leading-order terms of \eqref{eq:CO4SN} and \eqref{eq:CO4PD}.
\label{fig:D}
} 
\end{center}
\end{figure}

Finally we fix $\mu_1 = \mu_2 = \mu_3 = 0$ and vary the value of $\mu_4$
which breaks the condition on the resonance terms in $T_0$.
As in the previous case Theorem \ref{th:main} does not apply.  By again calculating the bifurcations as above we obtain
\begin{equation}
    \epsilon_{{\rm SN}} = \frac{(1-c_{2,0})^2}{4d_{5,0}k} + \mathcal{O}\left(\frac{1}{k^2}\right),
    \label{eq:CO4SN}
\end{equation}
\begin{equation}
    \epsilon_{{\rm PD}} = -\frac{3(1-c_{2,0})^2}{4d_{5,0}k}+ \mathcal{O}\left(\frac{1}{k^2}\right),
    \label{eq:CO4PD}
\end{equation}
and these agree with the numerically computed bifurcation values as shown in Fig.~\ref{fig:D}. The bifurcation values are asymptotically proportional to $\frac{1}{k}$ which is substantially slower than in the previous three cases.






\section{Discussion}
\label{sec:conc}
\setcounter{equation}{0}

In this paper we have considered globally resonant homoclinic tangencies in smooth two-dimensional maps and determined scaling laws for the size of parameter intervals in which single-round periodic solutions are asymptotically stable.  We have illustrated the results with an abstract four-parameter family.  It remains to identify globally resonant homoclinic tangencies in prototypical maps and maps derived from physical applications.

About a parameter point satisfying the conditions of Theorem \ref{le:sufficientConditions}, for any positive $j \in \mathbb{Z}$ there exists an open region of parameter space in which the family of maps has $j$ coexisting asymptotically stable periodic solutions.  It follows from Theorem \ref{th:main} that the largest ball (sphere) that the region contains has a diameter asymptotically proportional to $|\alpha|^{2 j}$.  But, as we have shown, different directions of perturbation yield different scaling laws.  Consequently we expect such regions to have an elongated shape for large values of $j$.  Indeed preliminary investigations reveal that such regions may have a particularly complicated shape, bounded by many of the saddle-node and periodic-doubling bifurcations identified above.

We believe the primary $|\alpha|^{2 k}$  scaling law  holds true for  higher-dimensional maps. Certainly similar aspects of homoclinic tangencies have been shown to be independent of dimension, \cite{GoSh97}. Also in the piecewise-linear setting, globally resonant homoclinic tangencies were analysed in \cite{SiTu17}.

\appendix

\section{Proof of Lemma \ref{le:T0k}}
\label{app:proof1}
\setcounter{equation}{0}

Write
\begin{equation}
T_0(x,y) = \begin{bmatrix}
\lambda x \left( 1 + a_1 x y + x^2 y^2 \tilde{F}(x,y) \right) \\
\sigma y \left( 1 + b_1 x y + x^2 y^2 \tilde{G}(x,y) \right)
\end{bmatrix}.
\label{eq:T02}
\end{equation}
Let $R > 0$ be such that
\begin{equation}
|a_1|, |b_1|, |\tilde{F}(x,y)|, |\tilde{G}(x,y)| \le R
\label{eq:boundR}
\end{equation}
for all $(x,y) \in \cN$ and all sufficiently small values of $\mu$. For simplicity we assume $\alpha > 0$; if instead $\alpha < 0$ the proof can be completed in the same fashion.

We have $\lambda = \alpha + \cO \left( |\alpha|^k \right)$
and $\sigma = \frac{1}{\alpha} + \cO \left(|\alpha|^k \right)$.
Thus there exists $M \ge 2 R$ such that
$\lambda \le \alpha \left( 1 + M \alpha^k \right)$ and
$\sigma \le \frac{1}{\alpha} \left( 1 + M \alpha^k \right)$
for sufficiently large values of $k$.
It follows (by induction on $j$) that
\begin{equation}
\sigma^j \le \alpha^{-j} \left( 1 + 2 M j \alpha^k \right),
\label{eq:T0kproof11}
\end{equation}
and
\begin{equation}
\left( \lambda \sigma \right)^j \le 1 + 4 M j \alpha^k,
\label{eq:T0kproof12}
\end{equation}
for all $j = 1,2,\ldots,k$
again assuming $k$ is sufficiently large.

Let $\ee > 0$.
Assume
\begin{align}
|x-1| &\le \ee, &
\left| \alpha^{-k} y - 1 \right| &\le \ee,
\label{eq:T0kproof20}
\end{align}
for sufficiently large values of $k$.
We can assume $\ee < 1 - \frac{1}{\sqrt{2}}$ so then
\begin{equation}
\frac{\alpha^k}{2} \le |x y| \le 2 \alpha^k.
\label{eq:T0kproof21}
\end{equation}

Write
\begin{equation}
T_0^j(x,y) = \begin{bmatrix}
\lambda^j x \left( 1 + j a_1 x y + x^2 y^2 \tilde{F}_j(x,y) \right) \\
\sigma^j y \left( 1 + j b_1 x y + x^2 y^2 \tilde{G}_j(x,y) \right)
\end{bmatrix}.
\label{eq:T0j}
\end{equation}
Below we will use induction on $j$ to show that
\begin{equation}
\left| \tilde{F}_j(x,y) \right|, \left| \tilde{G}_j(x,y) \right| \le 12 M j^2,
\label{eq:T0kproof30}
\end{equation}
for all $j = 1,2,\ldots,k$, assuming $k$ is sufficiently large.
This will complete the proof because with $j=k$, \eqref{eq:T0kproof30} implies \eqref{eq:T0k}.

Clearly \eqref{eq:T0kproof30} is true for $j = 1$:
$\left| \tilde{F}_1(x,y) \right| = \left| \tilde{F}(x,y) \right| \le R \le \frac{M}{2} < 12 M$
and similarly for $\tilde{G}_1$.
Suppose \eqref{eq:T0kproof30} is true for some $j < k$.
It remains for us to verify \eqref{eq:T0kproof30} for $j+1$.
First observe that by using $|a_1| \le R$, \eqref{eq:T0kproof21}, and the induction hypothesis,
\begin{equation}
\left| 1 + j a_1 x y + x^2 y^2 \tilde{F}_j(x,y) \right|
\le 1 + 2 R j \alpha^k + 48 M j^2 \alpha{2 k}.
\nonumber
\end{equation}
For sufficiently large $k$ this implies
\begin{equation}
\left| 1 + j a_1 x y + x^2 y^2 \tilde{F}_j(x,y) \right|
\le 1 + 2 M j \alpha^k,
\label{eq:T0kproof40}
\end{equation}
where we have also used $M \ge 2 R$.
Similarly
\begin{equation}
\left| 1 + j b_1 x y + x^2 y^2 \tilde{G}_j(x,y) \right|
\le 1 + 2 M j \alpha^k.
\label{eq:T0kproof41}
\end{equation}

Write $T_0^j(x,y) = (x_j,y_j)$.
By using \eqref{eq:T0kproof11}, \eqref{eq:T0kproof20}, and \eqref{eq:T0kproof41} we obtain
\begin{equation}
|y_j| \le \alpha^{k-j} (1+\ee) \left( 1 + 2 M j \alpha^k \right)^2.
\nonumber
\end{equation}
Thus $|y_j| \le \alpha^{k-j} (1 + 2 \ee)$, say, for sufficiently large values of $k$.
Also $|x_j y_j|$ is clearly small, so we can conclude that $(x_j,y_j) \in \cN$
(in particular we have shown that $(x_{k-1},y_{k-1})$
can be made as close to $\left( 0, \frac{1}{\alpha} \right)$ as we like).

By matching the first components of
$T_0^{j+1}(x,y) = \left( T_0 \circ T_0^j \right)(x,y)$ we obtain
\begin{align}
\lambda^{j+1} x \left( 1 + (j+1) a_1 x y + x^2 y^2 \tilde{F}_{j+1}(x,y) \right) &=
\lambda^{j+1} x \left( 1 + j a_1 x y + x^2 y^2 \tilde{F}_j(x,y) \right) \nonumber \\
&\quad+ \lambda^{j+1} a_1 x^2 y (1 + P)
+ \lambda^{j+1} x^3 y^2 Q,
\label{eq:T0kproof50}
\end{align}
where
\begin{align}
1 + P &= \left( \lambda \sigma \right)^j 
\left( 1 + j a_1 x y + x^2 y^2 \tilde{F}_j(x,y) \right)^2
\left( 1 + j b_1 x y + x^2 y^2 \tilde{G}_j(x,y) \right),
\label{eq:T0kproof51} \\
Q &= \left( \lambda \sigma \right)^{2 j} 
\left( 1 + j a_1 x y + x^2 y^2 \tilde{F}_j(x,y) \right)^3
\left( 1 + j b_1 x y + x^2 y^2 \tilde{G}_j(x,y) \right)^2
\tilde{F}(x_j,y_j).
\label{eq:T0kproof52}
\end{align}
By \eqref{eq:T0kproof12}, \eqref{eq:T0kproof40}, and \eqref{eq:T0kproof41}, we obtain
\begin{align}
1 + P &\le \left( 1 + 4 M j \alpha^k \right) \left( 1 + 2 M j \alpha^k \right)^3 \nonumber \\
&\le 1 + 11 M j \alpha^k, \nonumber \\
Q &\le \left( 1 + 4 M j \alpha^k \right)^2 \left( 1 + 2 M j \alpha^k \right)^5 R \nonumber \\
&\le 2 R, \nonumber
\end{align}
assuming $k$ is sufficiently large and
where we have also used $|\tilde{F}(x_j,y_j)| \le R$ (valid because $(x_j,y_j) \in \cN$).
From \eqref{eq:T0kproof50},
\begin{equation}
\tilde{F}_{j+1}(x,y) = \tilde{F}_j(x,y) + \frac{P}{x y} + Q.
\nonumber
\end{equation}
Then by using the induction hypothesis,
the lower bound on $|x y|$ \eqref{eq:T0kproof21},
and the above bounds on $P$ and $Q$, we arrive at
\begin{align}
\left| \tilde{F}_{j+1}(x,y) \right| &\le 12 M j^2 + 22 M j + 2 R \nonumber \\
&\le 12 M j^2 + 24 M j \nonumber \\
&< 12 M (j+1)^2. \nonumber
\end{align}
In a similar fashion by matching the second components of
$T_0^{j+1}(x,y) = \left( T_0 \circ T_0^j \right)(x,y)$
we obtain $\left| \tilde{G}_{j+1}(x,y) \right| < 12 M (j+1)^2$.
This verifies \eqref{eq:T0kproof30} for $j+1$
and so completes the proof.
\hfill $\Box$

{\footnotesize
\bibliographystyle{plain}
\bibliography{UnfRef}
}

\end{document}